\numberwithin{equation}{section}
\newcommand{\C}{\mathbb{C}}
\newcommand{\R}{\mathbb{R}}
\newcommand{\N}{\mathbb{N}}
\newcommand{\E}{\mathbb{E}}
\renewcommand{\P}{\mathbb{P}}
\newcommand{\bi}{\mathbf{i}}
\newtheorem{tm}{Theorem}[section]
\newtheorem{prop}{Proposition}[section]
\newtheorem{rk}{Remark}
\allowdisplaybreaks \allowdisplaybreaks[4]
\begin{document}

\title{\bf Convergence in probability of an ergodic and conformal multi-symplectic numerical scheme for a damped stochastic NLS equation}

       \author{
        { Jialin Hong\footnotemark[1], Lihai Ji\footnotemark[2], and Xu Wang\footnotemark[3]}\\
      }
       \maketitle

        \footnotetext{\footnotemark[1]\footnotemark[3]Institute of Computational Mathematics and Scientific/Engineering Computing, Academy of Mathematics and Systems Science, Chinese Academy of Sciences, Beijing 100190, P.R.China. J. Hong and X. Wang are supported by National Natural Science Foundation of China (NO. 91530118, NO. 91130003, NO. 11021101 and NO. 11290142).}
     \footnotetext{\footnotemark[2]Institute of Applied Physics and Computational Mathematics, Beijing 100094, P.R.China. L. Ji is supported by National Natural Science Foundation of China (NO. 11471310, NO. 11601032).}
        \footnotetext{\footnotemark[3]Corresponding author: wangxu@lsec.cc.ac.cn.}

       \begin{abstract}
In this paper, we investigate the convergence order in probability of a novel ergodic numerical scheme for damped stochastic nonlinear Schr\"{o}dinger equation with an additive noise. Theoretical analysis shows that our scheme is of order one in probability under appropriate assumptions for the
initial value and noise. Meanwhile, we show that our scheme possesses the unique ergodicity and preserves the discrete conformal multi-symplectic conservation law. Numerical experiments are given to show the longtime behavior of the discrete charge and the time average of the numerical solution, and to test the convergence order, which verify our theoretical results. \\

\textbf{AMS subject classification: }{\rm\small37M25, 60H35, 65C30, 65P10.}\\

\textbf{Key Words: }{\rm\small}Stochastic nonlinear Schr\"{o}dinger equation, fully discrete scheme, ergodicity, conformal multi-symplecticity, charge exponential evolution, convergence order

\end{abstract}


\section{Introduction}
We consider the following weakly damped stochastic nonlinear Schr\"odinger (NLS) equation with an additive noise (see also \cite{CHW16,DO05})
\begin{equation}\label{model}
\left\{
\begin{aligned}
&d\psi-\bi (\Delta\psi+\bi\alpha \psi+\lambda|\psi|^2\psi)dt=\epsilon QdW,\quad t\ge0,\;x\in[0,1]\subset\R,\\
&\psi(t,0)=\psi(t,1)=0,\\
&\psi(0,x)=\psi_0(x)
\end{aligned}
\right.
\end{equation}
with a complex-valued Wiener process $W$ defined on a filtered probability space $(\Omega,\mathcal{F},\{\mathcal{F}_t\}_{t\ge0},\P)$, a linear positive operator $Q$ on $L^2:=L^2(0,1)$ and $\lambda=\pm1$. $\alpha>0$ is the absorption coefficient, $\epsilon\ge0$ describes the size of the noise. In addition, $Q$ is assumed to commute with $\Delta$ and satisfies $Qe_k=\sqrt{\eta_k}e_k$, which $\{e_k\}_{k\ge1}$ is an eigenbasis of $\Delta$ with Dirichlet boundary condition in $L_0^2:=L^2_0(0,1)$. Throughout the paper, the subscript $0$ represents the homogenous boundary condition. Some additional assumptions on $Q$ will be given bellow. The Karhunen--Lo\`eve expansion yields
\begin{align*}
QW(t,x)=\sum_{k=0}^{\infty}\sqrt{\eta_k} e_k(x)\beta_k(t),\quad t\geq0,\quad x\in[0,1],
\end{align*}
where $\beta_k=\beta_k^1+\bi\beta_k^2$ with $\beta_k^1$ and $\beta_k^2$ being its real and imaginary parts, respectively. In addition, we assume that $\{\beta_k^i\}_{k\ge1,i=1,2}$ is a family of $\R$-valued independent identified Brownian motions.

As is well known, for the deterministic cubic NLS equation, the charge of the solution is a constant, i.e., $\|\psi(t)\|_{L^{2}}=\|\psi_{0}\|_{L^{2}}$. However, for \eqref{model} with a damped term and an additive noise in addition, the charge is no longer preserved and satisfies (see Proposition 2.1 in Section 2)
\begin{align}\label{ergodic}
\E\|\psi(t)\|_{L^2}^2=e^{-2\alpha t}\E\|\psi_0\|_{L^2}^2+\frac{\epsilon^{2}\eta}{\alpha}(1-e^{-2\alpha t}),
\end{align}
where $\eta:=\sum_{k=1}^{\infty}\eta_k<\infty$. From this equation, it can be seen that the damped term is necessary to ensure the uniform boundedness of the solution in stochastic case, which also ensures the existence of invariant measures. Indeed, if $\alpha=0$ and $\epsilon\neq0$, the $L^2$ norm grows linearly in time. Moreover, if there exists a unique invariant measure $\mu$ which satisfies
\begin{align}
\lim_{T\to\infty}\frac1T\int_0^T\E f(\psi(s))ds=\int_{H_0^1}fd\mu,\quad\forall~ f\in C_b(H_0^1)
\end{align}
with $H^1_0:=H^1_0(0,1)$, we say that the random process $\psi(t)$ is uniquely ergodic \cite{DO05}. The interested readers are referred to \cite{daprato,DO05} and references therein for the  study of ergodicity with respect to the exact solution of  stochastic PDEs. We also refer to \cite{abdulle,mattingly,talay,T02} for the study of ergodicity as well as approximate error with respect to numerical solutions of stochastic ODEs and to \cite{brehier,brehier2,brehier3} for those of parabolic stochastic PDEs. For damped stochastic NLS equation \eqref{model}, the authors in \cite{DO05} prove both the existence and the uniqueness of the invariant measure, and \cite{CHW16} presents an ergodic fully discrete scheme which is of order $2$ in space for $\lambda=0$ or $\pm1$ and order $\frac12$ in time for $\lambda=0$ or $-1$ in the weak sense.

The main goal of this work is to construct a fully discrete scheme of \eqref{model} which could inherit both the unique ergodicity and some other internal properties of the original equation, e.g., conformal multi-symplectic property (see \cite{MNS13} for a detailed description in deterministic case), and to give the optimal convergence order of the proposed scheme in probability. To this end, we first apply the central finite difference scheme to \eqref{model} in spatial direction to get a semi-discretized equation, whose solution is shown to be symplectic and uniformly bounded. Then, a splitting technique is used to discretize the semi-discretized equation and obtain an explicit fully discrete scheme. We show that the proposed scheme possesses a conformal multi-symplectic conservation law with its solution uniformly bounded. Thanks to the non-degeneracy of the additive noise, the numerical solution is also shown to be irreducible and strong Feller, which yields the uniqueness of the invariant measure.
Due to the fact that the nonlinear term of \eqref{model} is not global Lipschitz, it is particularly challenging and difficulty to analyze the convergence order of the proposed scheme. Motivated by \cite{BD06,L13}, we construct a truncated equation with a global Lipschitz nonlinear term such that the proposed scheme applied to the truncated equation shows order one in mean-square sense under appropriate hypothesis on initial value and noise. We then construct a submartingale based on which we finally derive convergence order one in probability for the original equation in temporal direction. To the best of our knowledge, there has been no work in the literature which constructs schemes with both ergodicity and conformal multi-symplecticity to \eqref{model}.

The rest of the paper is organized as follows. We show the conformal multi-symplecticity and the charge evolution for \eqref{model} in section 2. In section 3, we construct a fully discrete scheme, which could inherit both the ergodicity and the conformal multi-symplecitcity of the original system. In section 4, we introduce a truncated equation, based on which we derive convergence order one in probability for the proposed scheme. Numerical experiments are carried out in section 5 to verify our theoretical results.

\section{Damped stochastic NLS equation}
This section is devoted to investigate the internal properties of \eqref{model}. We define the space-time white noise $\dot{\chi}=\frac{dW}{dt}$, set $\psi=p+\bi q$, $\dot{\chi}=\dot{\chi}_1+\bi\dot{\chi}_2$ with $p$, $q$, $\dot{\chi}_1=\frac{dW_1}{dt}$ and $\dot{\chi}_2=\frac{dW_2}{dt}$ being real-valued functions, and rewrite \eqref{model} as
\begin{equation}\label{pq}
\left\{
\begin{aligned}
\begin{split}
p_t+q_{xx}+\alpha p+\lambda(p^2+q^2)q&=\epsilon Q\dot{\chi}_1,\\[2mm]
-q_t+p_{xx}-\alpha q+\lambda(p^2+q^2)p&=-\epsilon Q\dot{\chi}_2.
\end{split}
\end{aligned}
\right.
\end{equation}
Denoting $v=p_x$, $w=q_x$, $z=(p,q,v,w)^T$, above equations can be transformed into a compact form
\begin{align}\label{multisym}
Md_tz+K\partial_xzdt=-\alpha Mzdt+\nabla S_0(z)dt+\nabla S_1(z)\circ dW_1+\nabla S_2(z)\circ dW_2,
\end{align}
where
\begin{equation*}M=
\left(
\begin{array}{cccc}
0&-1&0&0\\
1&0&0&0\\
0& 0&0 &0\\
0&0&0&0\\
\end{array}
\right),\quad K=
\left(
\begin{array}{cccc}
0&0&1&0\\
0&0&0&1\\
-1& 0&0 &0\\
0&-1&0&0\\
\end{array}
\right)
\end{equation*}
and
\begin{equation*}
S_0(z)=-\frac{\lambda}4(p^2+q^2)^2-\frac12(v^2+w^2),\quad
S_1(z)=\epsilon Qq,\quad S_2(z)=-\epsilon Qp.
\end{equation*}

In the sequel, we use the notations $L^2:=L^2(0,1)$, $H^p:=H^p(0,1)$ and denote the domain of operators $\Delta^{\frac{p}2}$ with Dirichlet boundary condition by
\begin{align*}
\dot{H}^p:=D(\Delta^{\frac{p}2})=\left\{u\in L^2_0\bigg{|}\|u\|_{\dot{H}^p}:=\|\Delta^{\frac{p}2}u\|_{L^2}=\sum_{k=1}^{\infty}(k\pi)^p|(u,e_k)|^2\le\infty\right\},\quad p\ge1
\end{align*}
with $(u,v):=\int_0^1u(x)\overline{v}(x)dx$ for all $u,v\in L^2$ and $e_k(x)=\sqrt{2}\sin(k\pi x)$. Furthermore,  we denote the set of Hilbert-Schimdt operators from $L^2$ to $\dot{H}^p$ by $\mathcal{L}_2^p$ with norm $$\|Q\|_{\mathcal{L}_2^p}:=\sum_{k=1}^{\infty}\|Qe_k\|^2_{\dot{H}^p},\quad p\ge1.$$
Without pointing out, the equations below hold in the sense $\P$-a.s. We now prove that \eqref{model} possesses the stochastic conformal multi-symplectic structure, whose definition is also given in the following theorem.
\begin{tm}
Eq. \eqref{model} is a stochastic conformal multi-symplectic Hamiltionian system, and preserves the stochastic conformal multi-symplectic conservation law
\begin{align*}
d_t\omega(t,x)+\partial_x\kappa(t,x)dt=-\alpha\omega(t,x)dt,
\end{align*}
which means
\begin{equation}\label{2forms}
\begin{split}
\int_{x_0}^{x_1}\omega(t_1,x)dx
-\int_{x_0}^{x_1}\omega(t_0,x)dx
&+\int_{t_0}^{t_1}\kappa(t,x_1)dt
-\int_{t_0}^{t_1}\kappa(t,x_0)dt\\
&=-\int_{x_0}^{x_1}\int_{t_0}^{t_1}\alpha\omega(t,x)dtdx,
\end{split}
\end{equation}
where $\omega=\frac12dz\wedge Mdz$ and $\kappa=\frac12dz\wedge Kdz$ are two differential 2-forms associated with two skew-symmetric matrices $M$ and $K$.
\end{tm}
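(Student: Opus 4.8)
The plan is to mirror the deterministic multi-symplectic argument, letting the Stratonovich calculus guarantee that the classical Leibniz and chain rules survive. First I would note that \eqref{multisym} is already in canonical conformal form: $M$ and $K$ are skew-symmetric, the terms $\nabla S_0(z)$, $\nabla S_1(z)$, $\nabla S_2(z)$ are gradients, and $-\alpha Mz$ is the linear conformal damping. Recognising this structure is exactly what qualifies \eqref{model} as a stochastic conformal multi-symplectic Hamiltonian system; the substance of the theorem is the conservation law, which I would obtain from the associated variational equation.

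The engine is to apply the exterior derivative $d$ (the differential in phase space, acting on $z$) to \eqref{multisym}. Because $d$ commutes with $d_t$ and with $\partial_x$, and because the Stratonovich differential obeys the ordinary chain rule, this yields the variational equation
\begin{align*}
M\,d_t(dz)+K\,\partial_x(dz)\,dt={}&-\alpha M\,dz\,dt+S_0''(z)\,dz\,dt\\
&+S_1''(z)\,dz\circ dW_1+S_2''(z)\,dz\circ dW_2,
\end{align*}
where $S_i''(z)$ denotes the Hessian of $S_i$. I would then take the wedge product of this identity with $dz$. Since each $S_i''(z)$ is symmetric while the wedge is antisymmetric, every term $dz\wedge S_i''(z)\,dz$ vanishes; in particular the noise potentials $S_1(z)=\epsilon Qq$ and $S_2(z)=-\epsilon Qp$ are linear in $z$, so $S_1''=S_2''=0$ and the additive noise drops out already at this stage. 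This is precisely why the additive noise does not destroy the multi-symplecticity.

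For the remaining terms I would invoke the skew-symmetry of $M$ and $K$: a short computation gives $dz\wedge M\,d_t(dz)=d_t\!\big(\tfrac12 dz\wedge M\,dz\big)=d_t\omega$ and $dz\wedge K\,\partial_x(dz)=\partial_x\kappa$, while the damping wedges to $-\alpha\,dz\wedge M\,dz\,dt$, a term proportional to $\omega$. Collecting these gives the local conformal law $d_t\omega+\partial_x\kappa\,dt=-\alpha\omega\,dt$. Integrating this pointwise identity over the rectangle $[x_0,x_1]\times[t_0,t_1]$ and applying the fundamental theorem of calculus in each variable (Fubini for the area term) produces the integrated form \eqref{2forms}; this last step is routine.

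The main obstacle is not algebraic but lies in justifying these formal manipulations in the stochastic, infinite-dimensional setting. The decisive point is the use of Stratonovich rather than It\^o integration: only the Stratonovich product obeys the usual chain rule, so that $d$ passes through the stochastic differential without generating It\^o correction terms, and the deterministic-style computation above remains valid $\P$-a.s. One must also give precise meaning to the random $2$-forms $\omega,\kappa$ and to $d_t\omega$ for an equation driven by space-time noise; I would make this rigorous within the function-space framework $\dot H^p$, $\mathcal L_2^p$ introduced above, using the regularity of $Q$ (measured by $\|Q\|_{\mathcal L_2^p}$) to ensure the relevant quantities are well defined. Verifying that the linearity of $S_1,S_2$ (additive noise) is exactly what removes the stochastic contribution is the conceptual heart of the statement.
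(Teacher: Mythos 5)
Your strategy is, in substance, the same as the paper's: everything rests on the variational (linearized) form of \eqref{multisym}, on the fact that the additive noise disappears upon linearization (equivalently $S_1''=S_2''=0$), on the symmetry of the Hessian of $S_0$ killing the nonlinear term under the wedge, and on the skew-symmetry of $M$ and $K$. The only real difference is bookkeeping: you derive the pointwise conservation law in compact wedge notation and then integrate over $[t_0,t_1]\times[x_0,x_1]$, whereas the paper expands $\omega$ and $\kappa$ in the fixed basis $d(z_l)_{t_0}^{x_0}\wedge d(z_i)_{t_0}^{x_0}$ via the derivatives of the solution flow with respect to the initial data and verifies the integrated identity coefficientwise (its \eqref{HMN}) by differentiating in $t_1$ and using the fundamental theorem of calculus; your presentation is the cleaner of the two.

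However, there is a concrete gap in your last step, and it is exactly a factor of $2$. Since $\omega=\frac12\,dz\wedge M\,dz$, i.e.\ $dz\wedge M\,dz=2\omega$, wedging the damping term gives $dz\wedge(-\alpha M\,dz)\,dt=-2\alpha\omega\,dt$, not $-\alpha\omega\,dt$. The identities $dz\wedge M\,d_t(dz)=d_t\omega$ and $dz\wedge K\,\partial_x(dz)\,dt=\partial_x\kappa\,dt$ carry no factor $\frac12$ precisely because the two Leibniz terms coincide by skew-symmetry, so no compensating factor appears on the left-hand side. Carried out exactly, your computation therefore proves $d_t\omega+\partial_x\kappa\,dt=-2\alpha\omega\,dt$, and since $\omega$ is quadratic in $dz$ no rescaling of $\omega$ can turn the rate $2\alpha$ into $\alpha$; the passage to the stated law is unjustified as written. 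You should know that the paper's own proof hides the same discrepancy: in \eqref{N} the form $\int\int\alpha\omega\,dt\,dx$ is equated with $2\alpha\sum_{l,i}\mathcal{N}_{l,i}\,d(z_l)_{t_0}^{x_0}\wedge d(z_i)_{t_0}^{x_0}$, although expanding $\omega=dz_2\wedge dz_1$ yields the coefficient $\alpha$, not $2\alpha$; what the paper actually verifies, namely \eqref{HMN} with the term $2\alpha\mathcal{N}_{l,i}$, is precisely the $-2\alpha\omega$ law. So your argument is on par with the paper's, and the factor-of-two you silently absorbed points to an inconsistency in the theorem statement itself (with the stated $\omega$, $M$ and damping $-\alpha Mz$, the right-hand side of \eqref{2forms} should be $-2\int_{x_0}^{x_1}\int_{t_0}^{t_1}\alpha\omega\,dt\,dx$); still, a correct write-up of your proof must either derive the $2\alpha$ rate honestly or flag this discrepancy rather than paper over it.
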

\begin{proof}
To simplify the proof, we denote $(z_1,z_2,z_3,z_4):=(p,q,v,w)=z^T$ and $(z_l)_t^x:=z_l(t,x)$ for $l=1,2,3,4.$
Noticing that
$\omega=dz_2\wedge dz_1$
and
$\kappa=dz_1\wedge dz_3+dz_2\wedge dz_4$,
thus we have
\begin{equation}\label{H}
\begin{split}
&\int_{x_0}^{x_1}\omega(t_1,x)dx
-\int_{x_0}^{x_1}\omega(t_0,x)dx\\
=&\int_{x_0}^{x_1}\Big{[}d(z_2)_{t_1}^x\wedge d(z_1)_{t_1}^x-d(z_2)_{t_0}^x\wedge d(z_1)_{t_0}^x\Big{]}dx\\
=&\int_{x_0}^{x_1}\Bigg{[}\left(\sum_{l=1}^4\frac{\partial(z_2)_{t_1}^x}{\partial(z_l)_{t_0}^{x_0}}d(z_l)_{t_0}^{x_0}\right)\wedge\left(\sum_{i=1}^4\frac{\partial(z_1)_{t_1}^x}{\partial(z_i)_{t_0}^{x_0}}d(z_i)_{t_0}^{x_0}\right)\\
&-\left(\sum_{l=1}^4\frac{\partial(z_2)_{t_0}^x}{\partial(z_l)_{t_0}^{x_0}}d(z_l)_{t_0}^{x_0}\right)\wedge\left(\sum_{i=1}^4\frac{\partial(z_1)_{t_0}^x}{\partial(z_i)_{t_0}^{x_0}}d(z_i)_{t_0}^{x_0}\right)\Bigg{]}dx\\
=&\sum_{l=1}^4\sum_{i=1}^4\left[\int_{x_0}^{x_1}\left(\frac{\partial(z_2)_{t_1}^x}{\partial(z_l)_{t_0}^{x_0}}\frac{\partial(z_1)_{t_1}^x}{\partial(z_i)_{t_0}^{x_0}}
-\frac{\partial(z_2)_{t_0}^x}{\partial(z_l)_{t_0}^{x_0}}\frac{\partial(z_1)_{t_0}^x}{\partial(z_i)_{t_0}^{x_0}}\right)dx\right]
d(z_l)_{t_0}^{x_0}\wedge d(z_i)_{t_0}^{x_0}\\
=&:\sum_{l=1}^4\sum_{i=1}^4\mathcal{H}_{l,i}(t_1,x_1)
d(z_l)_{t_0}^{x_0}\wedge d(z_i)_{t_0}^{x_0},
\end{split}
\end{equation}
where $\mathcal{H}_{l,i}(t_1,x_1)=\int_{x_0}^{x_1}\left(\frac{\partial(z_2)_{t_1}^x}{\partial(z_l)_{t_0}^{x_0}}\frac{\partial(z_1)_{t_1}^x}{\partial(z_i)_{t_0}^{x_0}}
-\frac{\partial(z_2)_{t_0}^x}{\partial(z_l)_{t_0}^{x_0}}\frac{\partial(z_1)_{t_0}^x}{\partial(z_i)_{t_0}^{x_0}}\right)dx$.
Similarly, we obtain
\begin{equation}\label{M}
\begin{split}
&\int_{t_0}^{t_1}\kappa(t,x_1)dt
-\int_{t_0}^{t_1}\kappa(t,x_0)dt\\
=&\sum_{l=1}^4\sum_{i=1}^4\Bigg{[}\int_{t_0}^{t_1}\Bigg{(}
-\frac{\partial(z_1)_{t}^{x_1}}{\partial(z_i)_{t_0}^{x_0}}\frac{\partial(z_3)_{t}^{x_1}}{\partial(z_l)_{t_0}^{x_0}}
+\frac{\partial(z_1)_{t}^{x_0}}{\partial(z_i)_{t_0}^{x_0}}\frac{\partial(z_3)_{t}^{x_0}}{\partial(z_l)_{t_0}^{x_0}}\\
&-\frac{\partial(z_2)_{t}^{x_1}}{\partial(z_l)_{t_0}^{x_0}}\frac{\partial(z_4)_{t}^{x_1}}{\partial(z_i)_{t_0}^{x_0}}
+\frac{\partial(z_2)_{t}^{x_0}}{\partial(z_l)_{t_0}^{x_0}}\frac{\partial(z_4)_{t}^{x_0}}{\partial(z_i)_{t_0}^{x_0}}\Bigg{)}dt\Bigg{]}
d(z_l)_{t_0}^{x_0}\wedge d(z_i)_{t_0}^{x_0}\\
=&:\sum_{l=1}^4\sum_{i=1}^4\mathcal{M}_{l,i}(t_1,x_1)d(z_l)_{t_0}^{x_0}\wedge d(z_i)_{t_0}^{x_0}
\end{split}
\end{equation}
and
\begin{align}
\int_{x_0}^{x_1}\int_{t_0}^{t_1}\alpha\omega(t,x)dtdx
=&2\alpha\sum_{l=1}^4\sum_{i=1}^4\left[\int_{x_0}^{x_1}\int_{t_0}^{t_1}\left(\frac{\partial(z_2)_{t}^x}{\partial(z_l)_{t_0}^{x_0}}\frac{\partial(z_1)_{t}^x}{\partial(z_i)_{t_0}^{x_0}}\right)dtdx\right]
d(z_l)_{t_0}^{x_0}\wedge d(z_i)_{t_0}^{x_0}\nonumber\\\label{N}
=&:2\alpha\sum_{l=1}^4\sum_{i=1}^4\mathcal{N}_{l,i}(t_1,x_1)d(z_l)_{t_0}^{x_0}\wedge d(z_i)_{t_0}^{x_0}.
\end{align}
Adding \eqref{H}, \eqref{M} and \eqref{N} together, we can find out that equation \eqref{2forms} holds if
\begin{align}\label{HMN}
\mathcal{H}_{l,i}(t_1,x_1)+\mathcal{M}_{l,i}(t_1,x_1)+2\alpha\mathcal{N}_{l,i}(t_1,x_1)=0
\end{align}
for any $l,i=1,2,3,4$, $t_1\in\R_+$ and $x_1\in\R$.
In fact, rewritting \eqref{pq} as
\begin{equation*}
\left\{
\begin{aligned}
d_{t}z_1=&-\partial_x(z_4)dt-\alpha z_1dt+\frac{\partial S_0(z)}{\partial z_2}dt+\epsilon QdW_1,\\
d_{t}z_2=&\partial_x(z_3)dt-\alpha z_2dt-\frac{\partial S_0(z)}{\partial z_1}dt+\epsilon QdW_2
\end{aligned}
\right.
\end{equation*}
and taking partial derivatives with respect to $(z_i)_{t_0}^{x_0}$ and $(z_l)_{t_0}^{x_0}$ respectively, we have
\begin{equation*}
\left\{
\begin{aligned}
d_{t}\frac{\partial(z_1)_{t}^x}{\partial(z_i)_{t_0}^{x_0}}=&-\frac{\partial}{\partial x}\frac{\partial(z_4)_{t}^x}{\partial(z_i)_{t_0}^{x_0}}dt-\alpha \frac{\partial(z_1)_{t}^x}{\partial(z_i)_{t_0}^{x_0}}dt+\sum_{l=1}^4\frac{\partial S_1(z)}{\partial (z_2)_{t}^x\partial(z_l)_{t}^{x}}\frac{\partial(z_l)_t^x}{\partial(z_i)_{t_0}^{x_0}}dt,\\
d_{t}\frac{\partial(z_2)_{t}^x}{\partial(z_l)_{t_0}^{x_0}}=&\frac{\partial}{\partial x}\frac{\partial(z_3)_{t}^x}{\partial(z_l)_{t_0}^{x_0}}dt-\alpha \frac{\partial(z_2)_{t}^x}{\partial(z_l)_{t_0}^{x_0}}dt-\sum_{i=1}^4\frac{\partial S_1(z)}{\partial (z_1)_t^x\partial(z_i)_{t}^{x}}\frac{\partial(z_i)_{t}^{x}}{\partial(z_l)_{t_0}^{x_0}}dt.
\end{aligned}
\right.
\end{equation*}
Furthermore,
\begin{align*}
d_{t_1}\mathcal{H}_{l,i}(t_1,x_1)=&\int_{x_0}^{x_1}\frac{\partial(z_1)_{t_1}^x}{\partial(z_i)_{t_0}^{x_0}}d_{t_1}\left(\frac{\partial(z_2)_{t_1}^x}{\partial(z_l)_{t_0}^{x_0}}\right)+\frac{\partial(z_2)_{t_1}^x}{\partial(z_l)_{t_0}^{x_0}}d_{t_1}\left(\frac{\partial(z_1)_{t_1}^x}{\partial(z_i)_{t_0}^{x_0}}\right)dx\\
=&\int_{x_0}^{x_1}\frac{\partial(z_1)_{t_1}^x}{\partial(z_i)_{t_0}^{x_0}}\left(\frac{\partial}{\partial x}\frac{\partial(z_3)_{t_1}^x}{\partial(z_l)_{t_0}^{x_0}}-\alpha \frac{\partial(z_2)_{t_1}^x}{\partial(z_l)_{t_0}^{x_0}}\right)dt_1dx\\
&-\int_{x_0}^{x_1}\frac{\partial(z_2)_{t_1}^x}{\partial(z_l)_{t_0}^{x_0}}\left(\frac{\partial}{\partial x}\frac{\partial(z_4)_{t_1}^x}{\partial(z_i)_{t_0}^{x_0}}+\alpha \frac{\partial(z_1)_{t_1}^x}{\partial(z_i)_{t_0}^{x_0}}\right)dt_1dx\\
=&\frac{\partial(z_1)_{t_1}^{x_1}}{\partial(z_i)_{t_0}^{x_0}}\frac{\partial(z_3)_{t_1}^{x_1}}{\partial(z_l)_{t_0}^{x_0}}
-\frac{\partial(z_1)_{t_1}^{x_0}}{\partial(z_i)_{t_0}^{x_0}}\frac{\partial(z_3)_{t_1}^{x_0}}{\partial(z_l)_{t_0}^{x_0}}
-\frac{\partial(z_2)_{t_1}^{x_1}}{\partial(z_l)_{t_0}^{x_0}}\frac{\partial(z_4)_{t_1}^{x_1}}{\partial(z_i)_{t_0}^{x_0}}\\
&+\frac{\partial(z_2)_{t_1}^{x_0}}{\partial(z_l)_{t_0}^{x_0}}\frac{\partial(z_4)_{t_1}^{x_0}}{\partial(z_i)_{t_0}^{x_0}}
-2\alpha\int_{x_0}^{x_1}\frac{\partial(z_1)_{t_1}^x}{\partial(z_i)_{t_0}^{x_0}}\frac{\partial(z_2)_{t_1}^x}{\partial(z_l)_{t_0}^{x_0}}dxdt_1\\
=&-d_{t_1}\mathcal{M}_{l,i}(t_1,x_1)-2\alpha d_{t_1}\mathcal{N}_{l,i}(t_1,x_1),
\end{align*}
which together with the fact that $\mathcal{H}_{l,i}(t_0,x_1)+\mathcal{M}_{l,i}(t_0,x_1)+2\alpha\mathcal{N}_{l,i}(t_0,x_1)=0$
yields \eqref{HMN}.
We hence complete the proof.
\end{proof}
Next, we show that the charge of the solution $\psi(t)$, although it is not conserved anymore, satisfies an exponential type evolution law. 
\begin{prop}\label{exactmoment}
Assume that $\E\|\psi_0\|_{L^2}^2<\infty$, then the solution of \eqref{model} is uniformly bounded with
\begin{align}\label{bounded}
\E\|\psi(t)\|_{L^2}^2=e^{-2\alpha t}\E\|\psi_0\|_{L^2}^2+\frac{\epsilon^{2}\eta}{\alpha}(1-e^{-2\alpha t}).
\end{align}
\end{prop}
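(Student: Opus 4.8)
The plan is to apply Itô's formula to the functional $\psi\mapsto\|\psi\|_{L^2}^2=(\psi,\psi)$ along the solution of \eqref{model}, take expectations so that the martingale part drops out, and solve the resulting scalar linear ODE. Writing the equation in the form
\[
d\psi=\bi\Delta\psi\,dt-\alpha\psi\,dt+\bi\lambda|\psi|^2\psi\,dt+\epsilon Q\,dW,
\]
Itô's formula yields
\[
d\|\psi\|_{L^2}^2=2\,\mathrm{Re}(d\psi,\psi)+(d\psi,d\psi),
\]
where the final term is the quadratic-variation correction coming solely from the noise. I would treat the drift contribution and the correction term separately.

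For the drift, the key observation is that the conservative terms cancel in the real part. Using the Dirichlet boundary condition and integration by parts, $(\bi\Delta\psi,\psi)=-\bi\|\psi_x\|_{L^2}^2$ is purely imaginary, so $\mathrm{Re}(\bi\Delta\psi,\psi)=0$; likewise $(\bi\lambda|\psi|^2\psi,\psi)=\bi\lambda\int_0^1|\psi|^4dx$ is purely imaginary, so the nonlinear term contributes nothing either. Only the damping survives, since $\mathrm{Re}(-\alpha\psi,\psi)=-\alpha\|\psi\|_{L^2}^2$, giving a drift contribution $-2\alpha\|\psi\|_{L^2}^2\,dt$. The remaining piece of $2\,\mathrm{Re}(d\psi,\psi)$ is the noise term $2\epsilon\,\mathrm{Re}(Q\,dW,\psi)$, which is a martingale increment.

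For the correction term I would expand $Q\,dW=\sum_k\sqrt{\eta_k}\,e_k(d\beta_k^1+\bi d\beta_k^2)$ and use orthonormality of $\{e_k\}$ together with the Itô rules $(d\beta_k^i)^2=dt$ and the independence of the real and imaginary parts. Since $|d\beta_k^1+\bi d\beta_k^2|^2=(d\beta_k^1)^2+(d\beta_k^2)^2=2\,dt$, this gives $(d\psi,d\psi)=\epsilon^2\sum_k\eta_k\cdot 2\,dt=2\epsilon^2\eta\,dt$, using $\eta=\sum_k\eta_k<\infty$. Collecting everything and taking expectation (the martingale increment has zero mean) produces the linear ODE
\[
\frac{d}{dt}\E\|\psi(t)\|_{L^2}^2=-2\alpha\,\E\|\psi(t)\|_{L^2}^2+2\epsilon^2\eta,
\]
whose solution with datum $\E\|\psi_0\|_{L^2}^2$ is exactly \eqref{bounded}.

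The main obstacle is rigour rather than computation: justifying Itô's formula for $\|\cdot\|_{L^2}^2$ in this infinite-dimensional, non-globally-Lipschitz setting, and verifying that the stochastic integral is a genuine (square-integrable) martingale so that its expectation vanishes. I would handle this through a Galerkin spectral truncation, deriving the identity for the projected finite-dimensional system where the classical $\R^N$ Itô formula applies and all terms are integrable, and then passing to the limit; the assumption $\E\|\psi_0\|_{L^2}^2<\infty$ together with the bound \eqref{bounded} obtained via Gronwall from the ODE supplies the uniform control needed both to pass to the limit and to confirm the cancellation of the conservative terms.
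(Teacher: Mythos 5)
Your proposal follows essentially the same route as the paper: apply It\^o's formula to $\|\psi(t)\|_{L^2}^2$, observe that only the damping term survives in the real part of the drift while the noise contributes the correction $2\epsilon^2\eta\,dt$, take expectations to kill the martingale part, and solve the resulting linear ODE. Your version is correct, and merely spells out the cancellations and the Galerkin justification that the paper leaves implicit.
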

\begin{proof}
The It\^o's formula applied to $\|\psi(t)\|_{L^2}$ yields
\begin{align*}
d\|\psi(t)\|^2_{L^2}=-2\alpha\|\psi(t)\|^2_{L^2}dt+2\epsilon\Re\left[\int_0^1\overline{\psi}QdxdW\right]+2\epsilon^{2}\eta dt,
\end{align*}
where $\Re[\cdot]$ denotes the real part of a complex value.
Taking expectation on both sides of above equation and solving the ordinary differential equation, we derive
\begin{equation*}
\begin{split}
\E\|\psi(t)\|^2_{L^2}
=&e^{-2\alpha t}\left(\int_0^t2\epsilon^{2}\eta e^{2\alpha s}ds+\E\|\psi_0\|^2_{L^2}\right)\\
&=e^{-2\alpha t}\E\|\psi_0\|_{L^2}^2+\frac{\epsilon^{2}\eta}{\alpha}(1-e^{-2\alpha t}).
\end{split}
\end{equation*}
This concludes the proof.
\end{proof}
We hence get the conclusion that, the damped stochastic NLS equation \eqref{model} possesses the
stochastic conformal multi-symplectic conservation law \eqref{2forms} with its solution being uniformly bounded \eqref{bounded}, as well as  the unique ergodicity \cite{DO05}. A natural question is how to construct  numerical
schemes which could inherit the properties of \eqref{model} as many as possible, such as, stochastic conformal multi-symplectic conservation law, uniform boundedness of the solution and the unique ergodicity. 

\section{Ergodic fully discrete scheme}
We now focus on the construction of schemes which could inherit the stochastic conformal multi-symplecticity and the unique ergodicity.
\subsection{Spatial semi-discretization}
We apply central finite difference scheme to \eqref{model} and obtain
\begin{equation}\label{fenliang}
\left\{
\begin{aligned}
&d\psi_j-\bi \left(\frac{\psi_{j+1}-2\psi_j+\psi_{j-1}}{h^2}+\bi\alpha\psi_j+\lambda|\psi_j|^2\psi_j\right)dt=\epsilon\sum_{k=1}^P\sqrt{\eta_k}e_k(x_j)d\beta_k(t),\\
&\psi_0(t)=\psi_{J+1}(t)=0,\\
&\psi_j(0)=\psi_0(x_j),
\end{aligned}
\right.
\end{equation}
where $h$ is the uniform spatial step and $\psi_j:=\psi_j(t)$ is an approximation of $\psi(x_j,t)$ with $x_j=jh$, $j=1,2,\cdots,J$ and $(J+1)h=1$.  With notations $\Psi=(\psi_1,\cdots,\psi_J)^T\in\C^J$,  $\beta=(\beta_1,\cdots,\beta_P)^T\in\C^P$, $F(\Psi)=
\text{diag}\{|\psi_1|^2,\cdots,|\psi_J|^2\}$, $\Lambda=\text{diag}\{\sqrt{\eta_1^{}},\cdots,\sqrt{\eta_P^{}}\}$,
\begin{equation*}
A=\left(
\begin{array}{cccc}
-2&1 & & \\
 1&-2&1 & \\
&\ddots &\ddots &\ddots \\
 & &1 &-2\\
\end{array}
\right)\quad\text{and}\quad
\sigma=
\left(
\begin{array}{ccc}
e_1^{}(x_1^{})&\cdots &e_P^{}(x_1^{})\\
\vdots& &\vdots\\
e_1^{}(x_J^{})&\cdots&e_P^{}(x_J^{})
\end{array}
\right),
\end{equation*}
we rewrite \eqref{fenliang} into a finite dimensional stochastic differential equation
\begin{equation}\label{space}
\left\{
\begin{aligned}
&d\Psi-\bi \left(\frac1{h^2}A\Psi+\bi\alpha\Psi+\lambda F(\Psi)\Psi\right)dt=\epsilon\sigma\Lambda d\beta,\\
&\Psi(0)=(\psi_0(x_1^{}),\cdots,\psi_0(x_J^{}))^T.
\end{aligned}
\right.
\end{equation}

In the sequel, we denote the 2-norm for vectors or matrices by $\|\cdot\|$, i.e., $\|v\|=\left(\sum_{j=1}^J|v_j|^2\right)^{1/2}$ for a vector $v=(v_1,\cdots,v_J)^T\in\C^J$ and $\|A\|$=`the square root of the maximum eigenvalues of $A^TA$' for a matrix A. The solution of \eqref{space} is uniformly bounded, which is stated in the following proposition.
\begin{prop}\label{semimoment}
Assume that $\E\|\psi_0\|_{L^2}^2<\infty$, then the solution $\Psi$ of \eqref{space} is uniformly bounded with
\begin{align}
h\E\|\Psi(t)\|^2\le e^{-2\alpha t}h\E\|\Psi(0)\|^2+\frac{2\epsilon^2\eta^{(P)}}{\alpha}(1-e^{-2\alpha t}),
\end{align}
where $\eta^{(P)}:=\sum_{k=1}^P\eta_k^{}$.
\end{prop}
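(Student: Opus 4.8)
The plan is to mimic the proof of Proposition \ref{exactmoment} at the discrete level, applying It\^o's formula to the finite-dimensional semidiscrete system \eqref{space} and extracting an ordinary differential inequality (in fact equality) for $h\E\|\Psi(t)\|^2$. First I would apply the multidimensional It\^o formula to the function $\|\Psi\|^2=\Psi^*\Psi=\sum_{j=1}^J|\psi_j|^2$, where $\Psi^*$ denotes the conjugate transpose. The drift contributes three pieces: the linear dispersion term $\frac{1}{h^2}A\Psi$, the damping term $-\alpha\Psi$ (coming from $\bi\cdot\bi\alpha\Psi$), and the nonlinear term $\lambda F(\Psi)\Psi$. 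The diffusion contributes the It\^o correction $\epsilon^2\,\tr(\sigma\Lambda(\sigma\Lambda)^*)$ plus a martingale term of the form $2\epsilon\,\Re[\Psi^*\sigma\Lambda\,d\beta]$.

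The key step is checking that the dispersion and nonlinear drift contributions to $\frac{d}{dt}\|\Psi\|^2$ vanish, exactly as in the continuous case. For the dispersion term I would use that $A$ is a real symmetric matrix, so $\Re[\Psi^*(\bi\frac{1}{h^2}A\Psi)]=\frac{1}{h^2}\Re[\bi\,\Psi^*A\Psi]=0$ because $\Psi^*A\Psi$ is real. For the nonlinear term, since $F(\Psi)=\mathrm{diag}\{|\psi_1|^2,\dots,|\psi_J|^2\}$ is a real diagonal (hence Hermitian) matrix, $\Psi^*F(\Psi)\Psi$ is again real and nonnegative, so $\Re[\bi\lambda\,\Psi^*F(\Psi)\Psi]=0$. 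Thus only the damping term $-2\alpha\|\Psi\|^2$ and the It\^o correction survive in the drift. After taking expectations (which kills the martingale term), I would obtain the linear ODE
\begin{align*}
\frac{d}{dt}\,\E\|\Psi(t)\|^2=-2\alpha\,\E\|\Psi(t)\|^2+2\epsilon^2\,\tr\bigl(\sigma\Lambda\Lambda^*\sigma^T\bigr).
\end{align*}

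The remaining step is to identify the noise-trace constant and solve the ODE. Here I would compute $\tr(\sigma\Lambda\Lambda^*\sigma^T)=\sum_{k=1}^P\eta_k\sum_{j=1}^J e_k(x_j)^2=\sum_{k=1}^P\eta_k\sum_{j=1}^J 2\sin^2(k\pi x_j)$, and use the discrete near-orthogonality/normalization of the sine grid functions to see that $h\sum_{j=1}^J e_k(x_j)^2$ is of order one, so that after multiplying the whole ODE by $h$ the source term becomes $2\epsilon^2\eta^{(P)}$ up to the grid normalization; this is precisely where the factor $2$ and the truncated sum $\eta^{(P)}=\sum_{k=1}^P\eta_k$ in the stated bound originate. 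Solving the scalar linear ODE for $h\E\|\Psi(t)\|^2$ by the integrating factor $e^{2\alpha t}$, exactly as in Proposition \ref{exactmoment}, yields $h\E\|\Psi(t)\|^2=e^{-2\alpha t}h\E\|\Psi(0)\|^2+\frac{2\epsilon^2\eta^{(P)}}{\alpha}(1-e^{-2\alpha t})$, and I would finally absorb the grid-normalization discrepancy into the stated inequality ($\le$ rather than $=$).

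I expect the main obstacle to be the bookkeeping for the noise term: tracking how $\Re[\bi\,\Psi^*\sigma\Lambda\,d\beta]$ splits over the complex Brownian motion $\beta=\beta^1+\bi\beta^2$ and verifying that the discrete sum $h\sum_{j=1}^J e_k(x_j)^2$ gives the correct normalization so that the quadratic variation produces exactly $2\epsilon^2\eta^{(P)}$ after multiplication by $h$. The algebraic vanishing of the dispersion and nonlinear drift contributions is routine once symmetry of $A$ and $F(\Psi)$ is invoked, so the technical heart is matching the discrete noise constant to its continuous counterpart and justifying why the result is stated as an inequality.
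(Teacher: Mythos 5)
Your proposal follows the paper's proof essentially step for step: It\^o's formula applied to $\|\Psi\|^2$, cancellation of the dispersive and nonlinear drift contributions via the symmetry of $A$ and the Hermitian (real diagonal) structure of $F(\Psi)$, a linear ODE for $\E\|\Psi(t)\|^2$ with the noise-trace source, and integration with the factor $e^{2\alpha t}$. The only cosmetic difference is how the trace is bounded: the paper uses the crude estimate $\sum_{j=1}^J e_k^2(x_j)\le 2J\le 2h^{-1}$ (the actual source of its factor $2$), whereas you invoke discrete near-orthogonality of the sine grid functions, which gives an even sharper constant; either suffices since the statement is an inequality.
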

\begin{proof}
Similar to the proof of Proposition \ref{exactmoment}, we apply It\^o's formula to $\|\Psi(t)\|^2$ and obtain
\begin{equation}\label{semiito}
\begin{split}
d\|\Psi(t)\|^2&=2\Re[\overline{\Psi}^Td\Psi]+(\epsilon\sigma\Lambda d\overline{\beta})^T(\epsilon\sigma\Lambda d\beta)\\[1mm]
&=-2\alpha\|\Psi(t)\|^2dt+2\Re[\overline{\Psi}^T\epsilon\sigma\Lambda d\beta]\\
&+\epsilon^2\sum_{j=1}^J\left[\left(\sum_{k=1}^P\sqrt{\eta_k}e_k(x_j)d\overline{\beta_k}\right)^T\left(\sum_{k=1}^P\sqrt{\eta_k}e_k(x_j)d{\beta}_k\right)\right].
\end{split}
\end{equation}
Taking expectation on both sides of above equation leads to
\begin{align*}
d\E\|\Psi(t)\|^2=&-2\alpha \E\|\Psi(t)\|^2dt+2\epsilon^2\sum_{j=1}^J\sum_{k=1}^P\eta_ke_k^2(x_j)dt.
\end{align*}
Thus, multiplying above equation by $he^{2\alpha t}$ and taking integral from $0$ to $t$ leads to
\begin{align*}
\int_0^the^{2\alpha t}d\E\|\Psi(t)\|^2+\int_0^t2\alpha he^{2\alpha t}\E\|\Psi(t)\|^2dt
=2\epsilon^2h\sum_{j=1}^J\sum_{k=1}^P\eta_ke_k^2(x_j)\int_0^te^{2\alpha t}dt.
\end{align*}
Based on the fact that $\sum_{j=1}^Je^2_k(x_j)\le2J\le2h^{-1}$, we have
\begin{equation}\label{charge}
\begin{split}
e^{2\alpha t}h\E\|\Psi(t)\|^2-h\E\|\Psi(0)\|^2
=&\frac{\epsilon^2h}{\alpha}(e^{2\alpha t}-1)\sum_{j=1}^J\sum_{k=1}^P\eta_ke_k^2(x_j)\\
\le& \frac{2\epsilon^2\eta^{(P)}}{\alpha}(e^{2\alpha t}-1)
\end{split}
\end{equation}
which completes the proof. 

In addition, noticing that for $h=1/J$ and $x_j=jh,$ $j=1,\cdots,J$,
\begin{equation*}
\begin{split}
\E\|\psi_0\|_{L^2}^2=&\E\sum_{j=1}^J\int_{x_{j-1}}^{x_j}|\psi_0(x)|^2dx\\
&=\E\sum_{j=1}^J|\psi_0(x_j)|^2h+O(h)=h\E\|\Psi(0)\|^2+O(h),
\end{split}
\end{equation*}
we get the uniform boundedness under the assumption $\E\|\psi_0\|_{L^2}^2<\infty$.
\end{proof}

\begin{rk}\label{symeuler}
Scheme \eqref{fenliang} is equivalent to the symplectic Euler scheme applied to \eqref{multisym}, i.e.,
\begin{equation*}\label{space2}
\left\{
\begin{aligned}
&p_{j+1}-p_j=hv_{j+1},\\[4mm]
&q_{j+1}-q_j=hw_{j+1},\\
&v_{j+1}-v_j=h(q_j)_t+\alpha hq_j-h((p_j)^2+(q_j)^2)p_j-\epsilon\sum_{k=1}^P\sqrt{\eta_k}e_k(x_j)\frac{d\beta^2_k(t)}{dt},\\
&w_{j+1}-w_j=-h(p_j)_t-\alpha hp_j-h((p_j)^2+(q_j)^2)q_j+\epsilon\sum_{k=1}^P\sqrt{\eta_k}e_k(x_j)\frac{d\beta^1_k(t)}{dt}.
\end{aligned}
\right.
\end{equation*}
\end{rk}

\subsection{Full discretization}
To construct a fully discrete scheme, which could inherit the properties of \eqref{model}, we are motivated by splitting techniques. We drop the linear terms and stochastic term for the moment and consider the following equation
\begin{align}\label{nonlinear}
d\Psi(t)-\bi \lambda F(\Psi(t))\Psi(t)dt=0
\end{align}
first. Multiplying $\overline{F(\Psi(t))}$ to both sides of \eqref{nonlinear} and taking the imaginary part, we obtain $\|\Psi(t)\|^2=\|\Psi(0)\|^2$, which implies that $F(\Psi(t))=F(\Psi(0))$. Thus, \eqref{nonlinear} is shown to possess a unique solution $\Psi(t)=e^{\bi\lambda F(\Psi(0))t}\Psi(0)$. 

For linear equation
\begin{align*}
d\Psi(t)-\bi \left(\frac1{h^2}A\Psi(t)+\bi\alpha\Psi(t)\right)dt=\epsilon\sigma\Lambda d\beta,
\end{align*}
a modified mid-point scheme is applied to obtain its full discretization.
Now we can define the following splitting schemes initialized with $\Psi^0=\Psi(0)$,
\begin{align}\label{full1}
&\Psi^{n+1}=e^{-\frac12\alpha\tau}\tilde{\Psi}^n+\bi\frac{\tau}{h^2}A\frac{\Psi^{n+1}+e^{-\frac12\alpha\tau}\tilde{\Psi}^n}2-\frac12\alpha\tau\frac{\Psi^{n+1}+e^{-\frac12\alpha\tau}\tilde{\Psi}^n}2+\epsilon\sigma\Lambda\delta_{n+1}\beta,\\\label{full2}
&\tilde{\Psi}^n=e^{\bi\lambda F(\Psi^n)\tau}\Psi^n,
\end{align}
where $\Psi^n=(\psi_1^n,\cdots,\psi_J^n)^T\in\C^{J}$, $\tau$ denotes the uniform time step, $\delta_{n+1}\beta=\beta(t_{n+1})-\beta(t_n)$ and $t_n=n \tau$, $n\in\N$.
Noticing that schemes \eqref{full1}--\eqref{full2} can be rewritten as
\begin{equation}\label{full}
\begin{split}
\Psi^{n+1}-e^{f(\Psi^n)}\Psi^n=&\bi\frac{\tau}{2h^2}A\left(\Psi^{n+1}+e^{f(\Psi^n)}\Psi^n\right)\\
&-\frac14\alpha\tau\left(\Psi^{n+1}+e^{f(\Psi^n)}\Psi^n\right)+\epsilon\sigma\Lambda\delta_{n+1}\beta,
\end{split}
\end{equation}
which can also be expressed in the following explicit form
\begin{equation}\label{explicitscheme}
\begin{split}
\Psi^{n+1}=&\left(I-\frac{\bi\tau}{2h^2}A+\frac14\alpha\tau I\right)^{-1}\left(I+\frac{\bi\tau}{2h^2}A-\frac14\alpha\tau I\right)e^{f(\Psi^n)}\Psi^n\\
&+\left(I-\frac{\bi\tau}{2h^2}A+\frac14\alpha\tau I\right)^{-1}\epsilon\sigma\Lambda\delta_{n+1}\beta
\end{split}
\end{equation}
with $I$ denoting the identity matrix and $f(\Psi^n)=\left(-\frac12\alpha I+\bi\lambda F(\Psi^n)\right)\tau$. Thus, there uniquely exists a family of $\{\mathcal{F}_{t_n}\}_{n\ge1}$ adapted solutions $\{\Psi^n\}_{n\ge1}$ of \eqref{full} for sufficiently small $\tau$.

As for the proposed splitting schemes, \eqref{full2} coincides with the exact solution of the Hamiltonian system $d\Psi(t)-\bi \lambda F(\Psi(t))\Psi(t)dt=0$. It then suffices to show that \eqref{full1} possesses the conformal multi-symplectic conservation law, which is stated in the following theorem.
\begin{tm}
Scheme \eqref{full1} possesses the discrete conformal multi-symplectic conservation law
\begin{align*}
&e^{-\alpha\tau}\frac{dz_j^{n+1}\wedge Mdz_j^{n+1}-dz_j^{n}\wedge Mdz_j^{n}}{\tau}+\frac{dz_j^{n+\frac12}\wedge(K_1dz_{j+1}^{n+\frac12}-K_2dz_{j-1}^{n+\frac12})}{h}\\
=&-\frac12\alpha dz_j^{n+\frac12}\wedge Mdz_j^{n+\frac12}
\end{align*}
with $z_j^n=(p_j^n,q_j^n,v_j^n,w_j^n)^T$, $z_j^{n+\frac12}=\frac12(z_j^{n+1}+e^{-\frac12\alpha\tau}z_j^n)$,
\begin{equation*}K_1=
\left(
\begin{array}{cccc}
0&0&1&0\\
0&0&0&1\\
0& 0&0 &0\\
0&0&0&0\\
\end{array}
\right),\quad K_2=
\left(
\begin{array}{cccc}
0&0&0&0\\
0&0&0&0\\
-1& 0&0 &0\\
0&-1&0&0\\
\end{array}
\right)
\end{equation*}
and $K_1+K_2=K$.
\end{tm}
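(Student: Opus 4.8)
The plan is to recast scheme \eqref{full1} into a discrete stochastic multi-symplectic form and then mimic, at the discrete level, the variational argument used for the continuous law in Section 2. Writing $\Psi=p+\bi q$ and introducing the discrete auxiliary variables $v_j^{n},w_j^{n}$ of Remark \ref{symeuler} (the backward spatial differences of $p,q$ at the mid-time level), I would split the real and imaginary parts of \eqref{full1} together with the defining relations for $v,w$ into the single compact system
\begin{align*}
&M\frac{z_j^{n+1}-e^{-\frac12\alpha\tau}z_j^{n}}{\tau}
+\frac1h\Big(K_1\big(z_{j+1}^{n+\frac12}-z_j^{n+\frac12}\big)+K_2\big(z_j^{n+\frac12}-z_{j-1}^{n+\frac12}\big)\Big)\\
&\qquad=-\tfrac12\alpha M z_j^{n+\frac12}+\nabla S(z_j^{n+\frac12})+\epsilon(\cdots),
\end{align*}
where $z_j^{n+\frac12}=\frac12(z_j^{n+1}+e^{-\frac12\alpha\tau}z_j^{n})$, the last term stands for the (constant) additive-noise gradient, and $S$ collects only the surviving quadratic part $-\frac12(v^2+w^2)$ of $S_0$ together with the affine potentials $S_1,S_2$. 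The first check is that this system indeed reproduces \eqref{full1}: its third and fourth rows return the discrete constraints $v_j^{n+\frac12}=(p_j^{n+\frac12}-p_{j-1}^{n+\frac12})/h$ and $w_j^{n+\frac12}=(q_j^{n+\frac12}-q_{j-1}^{n+\frac12})/h$, and substituting these into the first two rows recovers the central-difference Laplacian $A/h^2$ acting at the mid-time level, i.e. exactly \eqref{full1}.

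Next I would take the exterior derivative $d$ of this compact system in phase space. The crucial structural simplifications are that (i) $\nabla S$ is an \emph{affine} function of $z$, since the only nonlinear potential $-\frac{\lambda}4(p^2+q^2)^2$ has been peeled off into the exactly-solved substep \eqref{full2}, so $d\,\nabla S=\nabla^2S\,dz$ with $\nabla^2S$ a constant symmetric matrix, and (ii) the additive-noise gradient is constant, hence annihilated by $d$. Wedging the differentiated identity with $dz_j^{n+\frac12}$ from the left, the symmetric Hessian term drops out because $dz\wedge B\,dz=0$ for symmetric $B$, and the noise has already disappeared; the damping term contributes $-\frac12\alpha\,dz_j^{n+\frac12}\wedge M\,dz_j^{n+\frac12}$, which is precisely the right-hand side of the claimed law.

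The two remaining terms are where the work lies. For the temporal term I would use the skew-symmetry of $M$ to establish the cross-term cancellation $dz_j^{n+1}\wedge M\,dz_j^{n}=dz_j^{n}\wedge M\,dz_j^{n+1}$, so that wedging $dz_j^{n+\frac12}=\frac12(dz_j^{n+1}+e^{-\frac12\alpha\tau}dz_j^{n})$ against $M(dz_j^{n+1}-e^{-\frac12\alpha\tau}dz_j^{n})$ collapses to a clean difference of the two-forms $dz_j^{\bullet}\wedge M\,dz_j^{\bullet}$ carrying the conformal factor. For the spatial term I would expand $dz_j^{n+\frac12}\wedge\frac1h(K_1(dz_{j+1}^{n+\frac12}-dz_j^{n+\frac12})+K_2(dz_j^{n+\frac12}-dz_{j-1}^{n+\frac12}))$ and invoke the key algebraic fact that $K_1-K_2$ is symmetric, so the self-term $dz_j^{n+\frac12}\wedge(K_2-K_1)dz_j^{n+\frac12}$ vanishes, leaving exactly $\frac1h\,dz_j^{n+\frac12}\wedge(K_1dz_{j+1}^{n+\frac12}-K_2dz_{j-1}^{n+\frac12})$, the spatial flux in the statement. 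Assembling the three contributions yields the discrete conformal multi-symplectic conservation law. I expect the main obstacle to be the conformal bookkeeping of the damping: tracking the mid-point factor $e^{-\frac12\alpha\tau}$ and the separate $-\frac12\alpha$ term so that they combine into the factor $e^{-\alpha\tau}$ in front of the temporal two-form difference, exactly as the continuous relation $d_t\omega+\partial_x\kappa\,dt=-\alpha\omega\,dt$ of Section 2 is discretized.
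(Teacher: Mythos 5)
Your proposal is correct and follows essentially the same route as the paper: decompose \eqref{full1} into real and imaginary parts with discrete backward differences $v,w$, assemble the compact $M$--$K_1$--$K_2$ form at the mid-time level $z_j^{n+\frac12}$, take exterior differentials, wedge with $dz_j^{n+\frac12}$, and conclude via the skew-symmetry of $M$ (cross-term cancellation) and the symmetry of $K_2-K_1$ (vanishing self-term). The only cosmetic difference is that the paper collects the noise increments and the components $(-v_j^{n+\frac12},-w_j^{n+\frac12})$ into a single vector $\xi_j^{n+\frac12}$ and uses $dz_j^{n+\frac12}\wedge d\xi_j^{n+\frac12}=0$, whereas you phrase the same cancellation through a constant symmetric Hessian of a quadratic potential plus a constant noise gradient.
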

\begin{proof}
We denote $\tilde{\Psi}^n$ by $\Psi^n$ for convenience in this proof, and we have
\begin{align*}
\Psi^{n+1}=e^{-\frac12\alpha\tau}{\Psi}^n+\bi\frac{\tau}{h^2}A\frac{\Psi^{n+1}+e^{-\frac12\alpha\tau}{\Psi}^n}2-\frac12\alpha\tau\frac{\Psi^{n+1}+e^{-\frac12\alpha\tau}{\Psi}^n}2+\epsilon\sigma\Lambda\delta_{n+1}\beta
\end{align*}
with $\Psi^n=\left(\psi_1^n,\cdots,\psi_J^n\right)^T\in\C^J$.
Denote $\psi_j^n:=p_j^n+\bi q_j^n$ with its real part $p_j^n$ and imaginary part $q_j^n$, $\delta_{n+1}\beta=\delta_{n+1}\beta^1+\bi\delta_{n+1}\beta^2$, $v_{j+1}^{n}:=(p_{j+1}^{n}-p_j^n)h^{-1}$ and $w_{j+1}^{n}:=(q_{j+1}^{n}-q_j^n)h^{-1}$.
Noticing that the $j$-th component of $h^{-2}A\Psi^n$ can be expressed as $h^{-1}(v_{j+1}^n-v_j^n)+\bi h^{-1}(w_{j+1}^n-w_j^n)$, we decompose \eqref{full1} with its real and imaginary parts respectively and derive
\begin{equation*}\label{theoformula}
\left\{
\begin{aligned}
\frac{p_j^{n+1}-e^{-\frac12\alpha\tau}p_j^n}{\tau}+&\frac{w_{j+1}^{n+1}-w_j^{n+1}}{2h}+e^{-\frac12\alpha\tau}\frac{w_{j+1}^{n}-w_j^{n}}{2h}\\
=&-\frac14\alpha(p_j^{n+1}+e^{-\frac12\alpha\tau}p_j^n)+\epsilon\sigma\Lambda\delta_{n+1}\beta^1,\\
\frac{q_j^{n+1}-e^{-\frac12\alpha\tau}q_j^{n}}{\tau}-&\frac{v_{j+1}^{n+1}-v_j^{n+1}}{2h}-e^{-\frac12\alpha\tau}\frac{v_{j+1}^{n}-v_j^{n}}{2h}\\
=&-\frac14\alpha(q_j^{n+1}+e^{-\frac12\alpha\tau}q_j^n)+\epsilon\sigma\Lambda\delta_{n+1}\beta^2.
\end{aligned}
\right.
\end{equation*}
Combining formulae $v_{j+1}^{n}=(p_{j+1}^{n}-p_j^n)h^{-1}$, $w_{j+1}^{n}=(q_{j+1}^{n}-q_j^n)h^{-1}$ with above equations, we get
\begin{align*}
M\frac{z_j^{n+1}-e^{-\frac12\alpha\tau}z_j^n}{\tau}+K_1\frac{z_{j+1}^{n+\frac12}-z_j^{n+\frac12}}{h}
+K_2\frac{z_{j}^{n+\frac12}-z_{j-1}^{n+\frac12}}{h}
=-\frac12\alpha Mz_j^{n+\frac12}+\xi_j^{n+\frac12},
\end{align*}
where $\xi_j^{n+\frac12}:=(-\epsilon\sigma\Lambda\delta_{n+1}\beta^2,\epsilon\sigma\Lambda\delta_{n+1}\beta^1,-v_j^{n+\frac12},-w_j^{n+\frac12})^T$.
Taking differential in phase space on both sides of above equation, and performing wedge product with $dz_j^{n+\frac12}$ respectively, we show the discrete conformal multi-symplectic conservation law based on the symmetry of matrix $-K_1+K_2$ and the fact $dz_j^{n+\frac12}\wedge(-K_1+K_2)dz_j^{n+\frac12}=0$, $dz_j^{n+\frac12}\wedge d\xi_j^{n+\frac12}=0$.
\end{proof}
\begin{rk}
It is also feasible to show that schemes \eqref{full1}--\eqref{full2} are conformal symplectic in temporal direction, which together with Remark \ref{symeuler}, yields the conformal multi-symplecticity of the fully discrete scheme \eqref{full}.

\end{rk}

\begin{prop}\label{fullmoment}
Assume that $\E\|\psi_0\|_{L^2}^2<\infty$, $Q\in\mathcal{HS}(L^2,\dot{H}^2)$ and $P\le C_*(J+1)$ for some constant $C_*\ge1$, then the solution $\{\Psi^n\}_{n\ge1}$ of \eqref{full} is uniformly bounded, i.e.,
\begin{align}
h\E\|\Psi^n\|^2\le e^{-\alpha t_n}h\E\|\Psi^0\|^2+C
\end{align}
with $t_n=n\tau$ and constant $C$ depending on $\alpha,\epsilon,Q$ and $C_*$.
\end{prop}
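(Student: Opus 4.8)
The plan is to turn the scheme into a one-step recursion for $\E\|\Psi^n\|^2$ of the form $\E\|\Psi^{n+1}\|^2\le e^{-\alpha\tau}\E\|\Psi^n\|^2+D$ and then iterate. First I would exploit the splitting structure \eqref{full1}--\eqref{full2}. The nonlinear half-step \eqref{full2} is norm-preserving: since $F(\Psi^n)$ is a real diagonal matrix and $\lambda$ is real, $e^{\bi\lambda F(\Psi^n)\tau}$ is diagonal with unimodular entries, so $\|\tilde{\Psi}^n\|=\|\Psi^n\|$ pathwise. Hence the whole estimate reduces to controlling the linear step \eqref{full1}.

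For the linear step I would write it, as in \eqref{explicitscheme}, in the compact form $\Psi^{n+1}=T\,e^{-\frac12\alpha\tau}\tilde{\Psi}^n+(I-B)^{-1}\xi^{n+1}$, where $B:=\bi\frac{\tau}{2h^2}A-\frac14\alpha\tau I$, $T:=(I-B)^{-1}(I+B)$ and $\xi^{n+1}:=\epsilon\sigma\Lambda\delta_{n+1}\beta$. The key deterministic estimate is $\|T\|\le 1$ and $\|(I-B)^{-1}\|\le 1$. Since $A$ is real and symmetric, $\bi\frac{\tau}{2h^2}A$ is skew-Hermitian, so $B$ is a normal matrix with eigenvalues $\bi s_k-c$, where the $s_k\in\R$ are the rescaled eigenvalues of $A$ and $c:=\frac14\alpha\tau>0$. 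Consequently $T$ is normal with eigenvalues $\frac{(1-c)+\bi s_k}{(1+c)-\bi s_k}$, whose moduli satisfy $\frac{(1-c)^2+s_k^2}{(1+c)^2+s_k^2}<1$; likewise each eigenvalue of $I-B$ has modulus $((1+c)^2+s_k^2)^{1/2}\ge 1+c\ge 1$, giving $\|(I-B)^{-1}\|\le 1$. This is the step where the dissipativity $\alpha>0$ is essential, and I expect it to be the main point of the argument.

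Next I would take expectations. Because $\xi^{n+1}$ is built from the increment $\delta_{n+1}\beta$, it is independent of $\mathcal{F}_{t_n}$ and mean-zero, while $e^{-\frac12\alpha\tau}\tilde{\Psi}^n$ is $\mathcal{F}_{t_n}$-measurable; conditioning on $\mathcal{F}_{t_n}$ therefore annihilates the cross term and yields $\E\|\Psi^{n+1}\|^2=\E\|T e^{-\frac12\alpha\tau}\tilde{\Psi}^n\|^2+\E\|(I-B)^{-1}\xi^{n+1}\|^2$. The first term is bounded by $e^{-\alpha\tau}\|T\|^2\E\|\tilde{\Psi}^n\|^2\le e^{-\alpha\tau}\E\|\Psi^n\|^2$ using the norm-preservation of the nonlinear step. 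For the second term I would use $\|(I-B)^{-1}\|\le 1$ together with $\E|\delta_{n+1}\beta_k|^2=2\tau$ and the independence of the modes to get $\E\|\xi^{n+1}\|^2=2\epsilon^2\tau\sum_{j=1}^J\sum_{k=1}^P\eta_k e_k^2(x_j)$, and then the bound $\sum_{j=1}^J e_k^2(x_j)\le 2h^{-1}$ already used in the proof of Proposition \ref{semimoment} to obtain $D:=\E\|(I-B)^{-1}\xi^{n+1}\|^2\le 4\epsilon^2\tau h^{-1}\eta^{(P)}$, with $\eta^{(P)}\le\eta<\infty$ guaranteed by $Q\in\mathcal{HS}(L^2,\dot{H}^2)$ and $P\le C_*(J+1)$.

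Finally I would iterate the recursion $\E\|\Psi^{n+1}\|^2\le e^{-\alpha\tau}\E\|\Psi^n\|^2+D$ and sum the geometric series to get $\E\|\Psi^n\|^2\le e^{-\alpha t_n}\E\|\Psi^0\|^2+\frac{D}{1-e^{-\alpha\tau}}$. Multiplying by $h$ and using $hD\le 4\epsilon^2\tau\eta^{(P)}$ gives $h\E\|\Psi^n\|^2\le e^{-\alpha t_n}h\E\|\Psi^0\|^2+\frac{4\epsilon^2\tau\eta^{(P)}}{1-e^{-\alpha\tau}}$, and since $\frac{\tau}{1-e^{-\alpha\tau}}\le\frac{e^{\alpha\tau}}{\alpha}$ is bounded for $\tau$ in a bounded range, the last term is a constant $C=C(\alpha,\epsilon,Q,C_*)$, as claimed. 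The only genuinely delicate point is the spectral contraction $\|T\|\le 1$; the stochastic part is routine once the cross term is removed by independence, and the finiteness of $h\E\|\Psi^0\|^2$ follows from $\E\|\psi_0\|_{L^2}^2<\infty$ exactly as in Proposition \ref{semimoment}.
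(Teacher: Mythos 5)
Your proof is correct, but it follows a genuinely different route from the paper's. The paper argues by an energy identity on the implicit form \eqref{full}: it multiplies by $\overline{\left(\Psi^{n+1}+e^{f(\Psi^n)}\Psi^n\right)}^T$, takes real parts and expectations, and lets the Laplacian term drop out by symmetry (see \eqref{moment}). The price is the noise cross term $\Re\big[\overline{(\Psi^{n+1}-e^{f(\Psi^n)}\Psi^n)}^T\epsilon\sigma\Lambda\Delta_{n+1}\beta\big]$, in which $\Psi^{n+1}$ is \emph{correlated} with the increment, so the paper must re-substitute the scheme into this term; that produces $h^{-2}A$ acting on the noise, and the resulting estimate \eqref{sto1} is exactly where the hypotheses $Q\in\mathcal{HS}(L^2,\dot{H}^2)$ and $P\le C_*(J+1)$ are consumed. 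You instead solve the one-step map explicitly, $\Psi^{n+1}=T e^{-\frac12\alpha\tau}\tilde{\Psi}^n+(I-B)^{-1}\xi^{n+1}$, and prove the contraction bounds $\|T\|\le1$, $\|(I-B)^{-1}\|\le1$ by normality of $B$ (a polynomial in the real symmetric matrix $A$) plus the dissipativity $\alpha>0$; since the noise then enters additively through a deterministic bounded operator, independence of $\xi^{n+1}$ from $\mathcal{F}_{t_n}$ annihilates the cross term outright, and you only ever need $\E\|\xi^{n+1}\|^2\le Ch^{-1}\tau$, i.e.\ \eqref{sto2}. A notable consequence is that your argument uses the assumptions $Q\in\mathcal{HS}(L^2,\dot{H}^2)$ and $P\le C_*(J+1)$ only to guarantee $\eta^{(P)}\le\eta<\infty$, so it actually establishes the proposition under weaker hypotheses (trace-class noise suffices). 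What the paper's heavier route buys in exchange: the energy computation mirrors the continuous It\^o arguments of Propositions \ref{exactmoment} and \ref{semimoment}, it does not rely on having an explicitly diagonalizable (normal) one-step operator, and the intermediate bounds \eqref{sto1}--\eqref{sto2} are not wasted --- they are reused verbatim in the fourth-moment estimates inside the proof of Theorem \ref{trunerror}, where your spectral shortcut would not eliminate the analogous terms. One cosmetic remark: your final bound $\tau/(1-e^{-\alpha\tau})\le e^{\alpha\tau}/\alpha$ is in fact cleaner than the paper's corresponding step, which invokes $1-e^{-\alpha\tau}\ge\alpha\tau$ (true only up to a constant for bounded $\tau$).
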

\begin{proof}
We multiply $\overline{\left(\Psi^{n+1}+e^{f(\Psi^n)}\Psi^n\right)}^T$ to \eqref{full}, take the real part and expectation, and obtain
\begin{equation}\label{moment}
\begin{split}
&\E\|\Psi^{n+1}\|^2-e^{-\alpha\tau}\E\|\Psi^n\|^2\\
=&-\frac14\alpha\tau\E\|\Psi^{n+1}+e^{f(\Psi^n)}\Psi^n\|^2+\E\left[\Re\left[\overline{\left(\Psi^{n+1}-e^{f(\Psi^n)}\Psi^n\right)}^T\epsilon\sigma\Lambda\Delta_{n+1}\beta\right]\right]\\
=&-\frac14\alpha\tau\E\|\Psi^{n+1}+e^{f(\Psi^n)}\Psi^n\|^2
+\E\Bigg{[}\Re\Bigg{[}\bigg{(}-\bi\frac{\tau}{2h^2}A\overline{\left(\Psi^{n+1}+e^{f(\Psi^n)}\Psi^n\right)}\\
&-\frac14\alpha\tau\overline{\left(\Psi^{n+1}+e^{f(\Psi^n)}\Psi^n\right)}+\overline{\epsilon\sigma\Lambda\Delta_{n+1}\beta}\bigg{)}^T\epsilon\sigma\Lambda\Delta_{n+1}\beta\Bigg{]}\Bigg{]}\\
\le&-\frac14\alpha\tau\E\|\Psi^{n+1}+e^{f(\Psi^n)}\Psi^n\|^2+\frac18\alpha\tau\E\|\Psi^{n+1}+e^{f(\Psi^n)}\Psi^n\|^2\\
&+C\tau\E\|h^{-2}A\epsilon\sigma\Lambda\Delta_{n+1}\beta\|^2
+\frac18\alpha\tau\E\|\Psi^{n+1}+e^{f(\Psi^n)}\Psi^n\|^2\\
&+C\tau\E\|\epsilon\sigma\Lambda\Delta_{n+1}\beta\|^2+\E\|\epsilon\sigma\Lambda\Delta_{n+1}\beta\|^2.
\end{split}
\end{equation}
For the smooth functions $e_k(x)$, we have
\begin{align*}
\left|\Delta e_k(x_j)-\frac{e_k(x_{j+1})-2e_k(x_j)+e_k(x_{j-1})}{h^2}\right|\le Ck^4h^2\le Ck^2,\quad k\ge1
\end{align*}
based on the fact $kh\le P(J+1)^{-1}\le C_*$.
Thus,
\begin{equation}\label{sto1}
\begin{split}
&\E\|h^{-2}A\epsilon\sigma\Lambda\Delta_{n+1}\beta\|^2
=\epsilon^2\sum_{j=1}^J\E\left|\sum_{k=1}^P\sqrt{\eta_k^{}}\frac{e_k(x_{j+1})-2e_k(x_j)+e_k(x_{j-1})}{h^2}\Delta_{n+1}\beta_k\right|^2\\
&\le2\epsilon^2\sum_{j=1}^J\sum_{k=1}^P\eta_k^{}\left(|\Delta e_k(x_j)|+Ck^2\right)^2\tau
\le CJ\tau\sum_{k=1}^Pk^4\eta_k
\le Ch^{-1}\tau.
\end{split}
\end{equation}
In the last step, we have used the fact $\sum\limits_{k=1}^Pk^4\eta_k\le C\|Q\|_{\mathcal{L}_2^2}\le C$. Similarly,
\begin{align}\label{sto2}
\E\|\epsilon\sigma\Lambda\Delta_{n+1}\beta\|^2
=\epsilon^2\sum_{j=1}^J\E\left|\sum_{k=1}^P\sqrt{\eta_k^{}}e_k(x_j)\Delta_{n+1}\beta_k\right|^2
\le CJ\eta\tau
\le Ch^{-1}\tau.
\end{align}
Substituting \eqref{sto1} and \eqref{sto2} into \eqref{moment} and multiplying the result by $h$, we get
\begin{align*}
h\E\|\Psi^{n+1}\|^2\le e^{-\alpha\tau}h\E\|\Psi^n\|^2+C\tau
\le e^{-\alpha t_{n+1}}h\E\|\Psi^0\|^2+C\tau\frac{1-e^{-\alpha t_n}}{1-e^{-\alpha\tau}},
\end{align*}
which, together with the fact $\frac{1-e^{-\alpha t_n}}{1-e^{-\alpha\tau}}\le\frac1{1-(1-\alpha\tau)}=\frac1{\alpha\tau}$,
completes the proof.
\end{proof}

\begin{tm}
Under the assumptions in Proposition \ref{fullmoment} and $\eta_k>0$ for $k=1,\cdots,P$. The solution $\{\Psi^n\}_{n\ge1}$ of \eqref{full} is uniquely ergodic with a unique invariant measure, denoted by $\mu^{\tau}_h$, satisfying
\begin{align}
\lim_{N\to\infty}\frac1N\sum_{n=0}^{N-1}\E f(\Psi^n)=\int_{\C^J}fd\mu_h^{\tau},\quad\forall\;f\in C_b(\C^J).
\end{align}
\end{tm}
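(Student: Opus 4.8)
The plan is to recognize $\{\Psi^n\}_{n\ge1}$ as a time-homogeneous Markov chain on $\C^J$ and then combine the Krylov--Bogoliubov procedure (for existence of an invariant measure) with Doob's theorem, i.e. strong Feller plus irreducibility (for uniqueness). First I would read off from the explicit form \eqref{explicitscheme} that
\[
\Psi^{n+1}=G(\Psi^n)+R\,\epsilon\sigma\Lambda\,\delta_{n+1}\beta,\qquad R:=\Big(I-\tfrac{\bi\tau}{2h^2}A+\tfrac14\alpha\tau I\Big)^{-1},
\]
where $G$ is a continuous (indeed smooth) map and $\{\delta_{n+1}\beta\}_{n\ge0}$ is an i.i.d. family of centered complex Gaussian increments with $\delta_{n+1}\beta$ independent of $\mathcal{F}_{t_n}$. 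Hence $\{\Psi^n\}$ is Markov with transition kernel $P(x,\cdot)=\mathcal{L}\big(G(x)+R\epsilon\sigma\Lambda\,\delta\beta\big)$, a Gaussian measure with mean $G(x)$ and a state-independent covariance $\Sigma$ (identifying $\C^J$ with $\R^{2J}$).

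For existence I would form the empirical measures $\mu_N:=\frac1N\sum_{n=0}^{N-1}\mathcal{L}(\Psi^n)$. Proposition \ref{fullmoment} gives $\sup_{n}\E\|\Psi^n\|^2<\infty$, so by Chebyshev the laws $\{\mathcal{L}(\Psi^n)\}$, and therefore $\{\mu_N\}$, form a tight family on $\C^J$. Prokhorov's theorem then yields a weakly convergent subsequence whose limit is, by the standard Krylov--Bogoliubov argument, an invariant measure of $P$.

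For uniqueness I would invoke Doob's theorem, for which it suffices to verify that $P$ is strong Feller and irreducible, both of which follow from the non-degeneracy of $\Sigma$. Since $\eta_k>0$ for $k=1,\dots,P$ and the matrix $\sigma$ of discrete sine values $e_k(x_j)$ has full row rank $J$ (its leading block is an invertible discrete sine transform), the matrix $\sigma\Lambda$, and hence $R\sigma\Lambda$, has rank $J$, so $\Sigma$ is positive definite on $\R^{2J}$. Consequently $P(x,\cdot)$ possesses a strictly positive Gaussian density $p(x,y)>0$; strict positivity gives $P(x,B)>0$ for every $x$ and every nonempty open $B$ (irreducibility), while the smoothness of the Gaussian density together with the continuity of $x\mapsto G(x)$ gives continuity of $x\mapsto P(x,\cdot)$ in total variation (strong Feller). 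Doob's theorem then guarantees at most one invariant measure, which combined with the existence above produces a unique invariant measure $\mu_h^\tau$.

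Finally, uniqueness upgrades the subsequential convergence to convergence of the whole sequence: every weak limit point of $\{\mu_N\}$ is invariant and therefore equals $\mu_h^\tau$, whence $\mu_N\to\mu_h^\tau$ weakly. Since $\int_{\C^J}f\,d\mu_N=\frac1N\sum_{n=0}^{N-1}\E f(\Psi^n)$ for $f\in C_b(\C^J)$, this is precisely the claimed limit. The main obstacle I anticipate is establishing the non-degeneracy (full rank) of $\Sigma$, i.e. that the effective noise $R\epsilon\sigma\Lambda\,\delta\beta$ excites all $2J$ real directions of the phase space; this is exactly where the hypotheses $\eta_k>0$ and the rank structure of $\sigma$ (guaranteed by $P\le C_*(J+1)$ with $P\ge J$) are essential, and it is this single property that simultaneously delivers both the irreducibility and the strong Feller property required by Doob's theorem.
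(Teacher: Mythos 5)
Your proposal is correct and takes essentially the same approach as the paper: existence of the invariant measure from the uniform moment bound of Proposition \ref{fullmoment} (the paper phrases this as a Lyapunov-function argument citing Da Prato, which is the same Krylov--Bogoliubov/tightness mechanism you invoke), and uniqueness from strong Feller plus irreducibility, both stemming from non-degeneracy of the noise --- where the paper gets irreducibility by solving explicitly for the increment $\delta_1\beta$ in \eqref{full} and strong Feller from smoothness of the density of $\Psi^1$, you read both off the strictly positive Gaussian transition density, an interchangeable use of the same fact, and you additionally spell out the final passage from uniqueness to the time-average limit, which the paper leaves implicit. One shared caveat: both arguments need $\sigma\Lambda$ to have full row rank $J$ (hence $P\ge J$), which is not implied by the stated hypothesis $P\le C_*(J+1)$ --- your parenthetical attributing the rank to that hypothesis is misleading --- but since the paper makes the same unproved assertion that ``$\sigma$ is full rank,'' this is not a gap of your proof relative to the paper's.
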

\begin{proof}
To show the existence of the invariant measures, we'll use a useful tool called Lyapunov function. For any fixed $h>0,$ we choose $V(\cdot):=h\|\cdot\|^2$ as the Lyapunov function, which satisfies that the level sets $K_c:=\{u\in\C^J:V(u)\le c\}$ are compact for any $c>0$ and $\E[V(\Psi^n)]\le V(\Psi^0)+C$ for any $n\in\N$. Thus, the Markov chain $\{\Psi^n\}_{n\in\N}$ possesses an invariant measure (see Proposition 7.10 in \cite{daprato}).

Now we show that $\{\Psi^n\}_{n\in\N}$ is irreducible and strong Feller (also known as the minorization condition in Assumption 2.1 of \cite{mattingly}), which yields the uniqueness of the invariant measure. In fact, for any $u,v\in\C^J$, we can derive from \eqref{full} that $\delta_1\beta$ can be chosen as
\begin{align*}
\epsilon\sigma\Lambda\delta_{1}\beta=v-e^{f(u)}u-\bi\frac{\tau}{2h^2}A\left(v+e^{f(u)}u\right)+\frac14\alpha\tau\left(v+e^{f(u)}u\right)
\end{align*}
such that $\Psi^0=u,\Psi^1=v$, where we have used the fact that $\sigma$ is full rank and $\Lambda$ is invertible.
Thus, we can conclude based on the homogenous property of the Markov chain  $\{\Psi^n\}_{n\in\N}$ that the transition kernel
$P_n(u,A):=\P(\Psi^n\in A|\Psi^0=u)>0$, which implies the irreducibility of the chain.
On the other hand, as $\delta_1\beta$ has $C^{\infty}$ density, it follows from \eqref{explicitscheme} that $\Psi^1$ also has $C^{\infty}$ density for any deterministic initial value $\Psi^0=\Psi(0)$. Then explicit construction shows that $\{\Psi^n\}_{n\in\N}$ possesses a family of $C^{\infty}$ density and is strong Feller.
\end{proof}

The theorems above are evidently consistent with the continuous results \eqref{ergodic}, \eqref{2forms} and \eqref{bounded}, respectively. The next result concerns the error estimation of the proposed scheme, where the truncation technique will be used to deal with the non-global Lipschitz nonlinearity.

\section{Convergence order in probability}
In this section, we focus on the approximate error for the proposed scheme in temporal direction. As the nonlinear term is not global Lipschitz, we consider the following truncated function first
\begin{align}\label{truncate}
d\Psi_R-\bi\left(\frac1{h^2}A\Psi_R+\bi\alpha\Psi_R+\lambda F_R(\Psi_R)\Psi_R\right)dt=\epsilon\sigma\Lambda d\beta,
\end{align}
with $\Psi_R:=\Psi_R(t)=(\psi_{R,1}^{}(t),\cdots,\psi_{R,J}^{}(t))^T$ and initial value $\Psi_R(0)=\Psi(0)$. Here $F_R(v)=\theta\left(\frac{\|v\|}{R}\right)F(v)$ for any vector $v\in\C^J$ and a cut-off function $\theta\in C^{\infty}(\R)$ satisfying $\theta(x)=1$ for $x\in[0,1]$ and $\theta(x)=0$ for $x\ge2$ (see also \cite{BD06,L13}). In addition, we have
\begin{align*}
\|F_R(\Psi_R)\|=\theta\left(\frac{\|\Psi_R\|}{R}\right)\max_{1\le j\le J}|\psi_{R,j}^{}|^2
\le\theta\left(\frac{\|\Psi_R\|}{R}\right)\|\Psi_R\|^2
\le4R^2.
\end{align*}
As a result, the nonlinear term $F_R(\Psi_R)\Psi_R$ is global Lipschitz with respect to the norm $\|\cdot\|$.
The proposed scheme \eqref{explicitscheme} applied to the truncated equation \eqref{truncate} yields the following scheme
\begin{equation}\label{truncatescheme}
\begin{split}
\Psi_R^{n+1}=&\left(I-\frac{\bi\tau}{2h^2}A+\frac14\alpha\tau I\right)^{-1}\left(I+\frac{\bi\tau}{2h^2}A-\frac14\alpha\tau I\right)e^{f_R(\Psi_R^n)}\Psi_R^n\\
&+\left(I-\frac{\bi\tau}{2h^2}A+\frac14\alpha\tau I\right)^{-1}\epsilon\sigma\Lambda\Delta_{n+1}\beta,
\end{split}
\end{equation}
where $f_R(\Psi_R^n)=\left(-\frac12\alpha I+\bi\lambda F_R(\Psi_R^n)\right)\tau$ and $\Psi_R^n=(\psi_{R,1}^n,\cdots,\psi_{R,J}^n)^T$.

\begin{tm}\label{trunerror}
Consider Eq. \eqref{truncate} and the scheme \eqref{truncatescheme}. Assume that $\E\|\psi_0\|_{L^2}^2<\infty$, $Q\in\mathcal{HS}(L^2,\dot{H}^2)$, $\alpha\ge\frac12$ and $\tau=O(h^4)$. For $T=N\tau$, there exists a constant $C_R$ which depends on $\alpha,\epsilon,R,Q,\psi_0$ and is independent of $T$ and $N$ such that
\begin{align*}
h\E\|\Psi_R(T)-\Psi_R^N\|^2\le C_R\tau^2.
\end{align*}
\end{tm}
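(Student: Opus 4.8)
The plan is to combine a one-step (local) truncation error analysis with a stability/error-propagation estimate, exploiting the additive structure of the noise so that the stochastic increments in the exact and discrete dynamics cancel. Before that I would record the moment bounds that control the $R$-dependent constant. Proposition \ref{fullmoment} already gives $h\E\|\Psi_R^n\|^2$ bounded uniformly in $n$, and the same It\^o computation applied to \eqref{truncate} gives a uniform bound on $h\E\|\Psi_R(t)\|^2$. Since $F_R(\Psi_R)\Psi_R$ is globally Lipschitz with constant $C_R$ and the noise is additive, standard arguments also yield the increment bound $\E\|\Psi_R(t)-\Psi_R(s)\|^2\le C_R|t-s|$ and a control on the stiff discrete quantity $h\E\|h^{-2}A\Psi_R\|^2$; this is where $Q\in\mathcal{HS}(L^2,\dot{H}^2)$ and $\tau=O(h^4)$ are used, the latter to keep the operator $h^{-2}A$ (of size $O(h^{-2})$) harmless.

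Next I would compare both dynamics started from the same value $\Psi_R(t_n)$. In the integral form of \eqref{truncate} on $[t_n,t_{n+1}]$ the noise contributes exactly $\epsilon\sigma\Lambda\Delta_{n+1}\beta$, whereas the implicit form of \eqref{truncatescheme} (cf. \eqref{full}) carries the filtered increment $(I-\tfrac{\bi\tau}{2h^2}A+\tfrac14\alpha\tau I)^{-1}\epsilon\sigma\Lambda\Delta_{n+1}\beta$. Subtracting, the local error $R_n$ splits as $R_n=\bar R_n+\tilde R_n$: a predictable part $\bar R_n$ arising from Taylor-expanding the splitting factor $e^{f_R(\Psi_R(t_n))}=e^{-\frac12\alpha\tau}e^{\bi\lambda F_R\tau}$ to second order and from the midpoint-rule consistency for the drift integral, which should satisfy $\|\bar R_n\|\le C_R\tau^2$; and a martingale part $\tilde R_n$ with $\E[\tilde R_n\mid\mathcal{F}_{t_n}]=0$ gathering the noise-filtering discrepancy $\big(I-(I-\tfrac{\bi\tau}{2h^2}A+\tfrac14\alpha\tau I)^{-1}\big)\epsilon\sigma\Lambda\Delta_{n+1}\beta$ together with the fluctuation of $\Psi_R(s)-\Psi_R(t_n)$ inside the drift integral, for which \eqref{sto1}--\eqref{sto2} and the increment bounds give $h\E\|\tilde R_n\|^2\le C_R\tau^3$. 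This is the additive-noise analogue of an order-one consistency statement (predictable part $O(\tau^2)$, stochastic part $O(\tau^{3/2})$ in $L^2$).

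For the global estimate set $e^n=\Psi_R(t_n)-\Psi_R^n$ and write $e^{n+1}=L\big(e^{f_R(\Psi_R(t_n))}\Psi_R(t_n)-e^{f_R(\Psi_R^n)}\Psi_R^n\big)+R_n$, where $L$ is the linear Cayley factor in \eqref{truncatescheme}. Two structural facts drive the stability: $L$ is non-expansive ($\|L\|\le1$, since $\bi h^{-2}A$ is skew-Hermitian and the $\tfrac14\alpha\tau I$ terms turn the Cayley quotient into a contraction), and the splitting map satisfies $\|e^{f_R(u)}u-e^{f_R(v)}v\|\le e^{-\frac12\alpha\tau}(1+C_R\tau)\|u-v\|$ because $e^{\bi\lambda F_R\tau}$ is unitary and $F_R$ is Lipschitz. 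Taking conditional expectations, using $\E[\tilde R_n\mid\mathcal{F}_{t_n}]=0$ to remove the cross term with the martingale part and Young's inequality on the cross term with $\bar R_n$, yields a recursion of the form $\E\|e^{n+1}\|^2\le e^{-\alpha\tau}(1+C_R\tau)\,\E\|e^n\|^2+C_R\tau^3$. Iterating and summing the resulting geometric series gives $h\E\|e^N\|^2\le C_R\tau^2$, with a constant independent of $N$ and $T$ precisely because the damping factor $e^{-\alpha\tau}$ dominates the per-step growth, so the $N$ local contributions sum to $O(\tau^2)$ instead of accumulating an $e^{CT}$ factor.

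The main obstacle is exactly this last point, namely obtaining a constant uniform in $T$. It requires the dissipation-induced contraction to beat the one-step expansion generated both by the (merely $R$-dependent Lipschitz) nonlinearity and by the stiff operator $h^{-2}A$; this is where $\alpha\ge\tfrac12$ and $\tau=O(h^4)$ play their roles, with $\tau=O(h^4)$ ensuring that the $h^{-2}A$-weighted noise and consistency terms stay $O(\tau^3)$ after the factor $h$. A secondary technical care is to keep the dependence of every constant on $R$ explicit and separate from $T$, since the subsequent passage to convergence in probability for \eqref{space} relies on letting $R\to\infty$ while the time horizon stays fixed.
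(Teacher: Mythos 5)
Your proposal collapses at precisely the point you yourself flag as the main obstacle, and the fix you invoke does not work. Your error recursion is
\begin{align*}
\E\|e^{n+1}\|^2\le e^{-\alpha\tau}(1+C_R\tau)\,\E\|e^n\|^2+C_R\tau^3,\qquad e^n:=\Psi_R(t_n)-\Psi_R^n,
\end{align*}
where $C_R$ is (at least) the Lipschitz constant of $v\mapsto F_R(v)v$, of size roughly $R^2$. The factor $e^{-\alpha\tau}(1+C_R\tau)\approx 1+(C_R-\alpha)\tau$ exceeds $1$ whenever $C_R>\alpha$, \emph{no matter how small $\tau$ is}: both the expansion $C_R\tau$ and the damping $\alpha\tau$ are linear in $\tau$, so shrinking $\tau$ cannot restore the contraction, and neither can the hypotheses you cite --- $\alpha\ge\frac12$ is a fixed constant while the subsequent application (Theorem \ref{probability}) takes $R=\sqrt{h^{-1}}K_1$ with $K_1\to\infty$, and $\tau=O(h^4)$ only controls powers of $h^{-1}$. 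Iterating your recursion therefore yields $h\E\|e^N\|^2\lesssim e^{(C_R-\alpha)T}\tau^2$, i.e.\ exactly the $e^{CT}$-type growth that the theorem's $T$- and $N$-independence is designed to exclude. This is not a presentational slip: in your one-step-flow formulation the Lipschitz constant of the splitting map $v\mapsto e^{f_R(v)}v$ necessarily multiplies $\|e^n\|$ at first order in $\tau$, so no choice of Young weights or conditioning can recover a summable series once $C_R>\alpha$.

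The paper's proof is structured differently at exactly this spot. It writes the error through the mild solution: $e^{n+1}=S(\tau)e^n+\cdots$ with $S(\tau)=e^{B\tau}$, $B=\bi h^{-2}A-\frac{\alpha}{2}I$, so the previous error propagates with $\E\|S(\tau)e^n\|^2=e^{-\alpha\tau}\E\|e^n\|^2$ exactly, while the difference of nonlinear terms never acts directly on $e^n$; it appears only inside the time integral $\int_{t_n}^{t_{n+1}}S(t_{n+1}-s)\big[F_R(\Psi_R(s))\Psi_R(s)-F_R(\Psi_R^n)\Psi_R^n\big]ds$, and Cauchy--Schwarz over this interval of length $\tau$ turns its contribution into $C\tau^2e^{-\alpha\tau}\E\|e^n\|^2$ (the term $\tilde{\uppercase\expandafter{\romannumeral3}}_2$), i.e.\ \emph{quadratic} rather than linear in $\tau$. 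The resulting per-step factor $(1+C\tau^2)e^{-\alpha\tau}$ is genuinely below $1$ once $\tau\le\tau_0(R)$ --- a smallness condition depending on $R$, which the theorem permits --- and this is what makes the geometric series sum to a constant independent of $T$ and $N$. Two secondary corrections: in the paper, $\alpha\ge\frac12$ is not used to ``beat'' the nonlinearity in the recursion but to obtain the higher moment bounds $\E\|\Psi_R^n\|^4\le Ch^{-2}$ and $\E\|\Psi_R^n\|^8\le Ch^{-4}$ (via $(1+\tau)e^{-2\alpha\tau}<1$), which are indispensable for bounding the Taylor remainder $O(f_R(\Psi_R^n)^2)\Psi_R^n$ of your ``predictable part''; and your claimed local bounds $\|\bar R_n\|\le C_R\tau^2$, $h\E\|\tilde R_n\|^2\le C_R\tau^3$ silently rely on these same moment estimates together with the stiff-term bounds \eqref{sto1}--\eqref{sto2}, so they cannot be treated as ``standard'' before that groundwork is laid.
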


\begin{proof}
Denote semigroup operator $S(t):=e^{Bt}$ which is generated by the linear operator $B:=\bi\frac1{h^2}A-\frac{\alpha}2I$, then the mild solution of \eqref{space} is
\begin{equation}\label{psit}
\begin{split}
\Psi_R(t_{n+1})=&S(\tau)\Psi(t_n)+\int_{t_n}^{t_{n+1}}S(t_{n+1}-s)\bi\lambda F_R(\Psi_R(s))\Psi_R(s)ds\\
&-\int_{t_n}^{t_{n+1}}S(t_{n+1}-s)\frac{\alpha}2\Psi_R(s)ds+\int_{t_n}^{t_{n+1}}S(t_{n+1}-s)\epsilon\sigma\Lambda d\beta(s).
\end{split}
\end{equation}
Subtracting \eqref{truncatescheme} from \eqref{psit}, we obtain
\begin{equation*}\label{strongerror}
\begin{split}
&\Psi_R(t_{n+1})-\Psi_R^{n+1}\\
=&S(\tau)\Psi_R(t_n)-\left(I-\frac12B\tau\right)^{-1}\left(I+\frac12B\tau\right)e^{f_R(\Psi_R^n)}\Psi_R^n\\
&+\int_{t_n}^{t_{n+1}}S(t_{n+1}-s)\bi\lambda F_R(\Psi_R(s))\Psi_R(s)ds-\int_{t_n}^{t_{n+1}}S(t_{n+1}-s)\frac{\alpha}2\Psi_R(s)ds\\
&+\int_{t_n}^{t_{n+1}}\left(S(t_{n+1}-s)-\left(I-\frac12B\tau\right)^{-1}\right)\epsilon\sigma\Lambda d\beta(s)\\
=&S(\tau)\left(\Psi_R(t_n)-\Psi_R^n\right)+\left[S(\tau)-\left(I-\frac12B\tau\right)^{-1}\left(I+\frac12B\tau\right)\right]\Psi_R^n\\
&+\left(I-\frac12B\tau\right)^{-1}\left(I+\frac12B\tau\right)\left(\Psi_R^n-e^{f_R(\Psi_R^n)}\Psi_R^n\right)\\
&+\int_{t_n}^{t_{n+1}}S(t_{n+1}-s)\bi\lambda F_R(\Psi_R(s))\Psi_R(s)ds-\int_{t_n}^{t_{n+1}}S(t_{n+1}-s)\frac12\alpha\Psi_R(s)ds\\
&+\int_{t_n}^{t_{n+1}}\left(S(t_{n+1}-s)-\left(I-\frac12B\tau\right)^{-1}\right)\epsilon\sigma\Lambda d\beta(s)\\
=&:\uppercase\expandafter{\romannumeral1}+\uppercase\expandafter{\romannumeral2}+\uppercase\expandafter{\romannumeral3}+\uppercase\expandafter{\romannumeral4}+\uppercase\expandafter{\romannumeral5}+\uppercase\expandafter{\romannumeral6}.
\end{split}
\end{equation*}
To show the strong convergence order of \eqref{truncatescheme}, we give the estimates of above terms, respectively.
For terms $\uppercase\expandafter{\romannumeral1}$ and $\uppercase\expandafter{\romannumeral2}$, we have
\begin{align}\label{term1}
\E\|\uppercase\expandafter{\romannumeral1}\|^2
=\E\left\|e^{(\bi\frac1{h^2}A-\frac{\alpha}2I)\tau}(\Psi_R(t_n)-\Psi_R^n)\right\|^2
=e^{-\alpha\tau}\E\|\Psi_R(t_n)-\Psi_R^n\|^2
\end{align}
and
\begin{equation}\label{term2}
\begin{split}
\E\|\uppercase\expandafter{\romannumeral2}\|^2
\le C\E\|(B\tau)^3\Psi_R^n\|^2
&\le C\tau^6\|B^3\|^2\E\|\Psi_R^n\|^2\\[2mm]
&\le Ch^{-13}\tau^6\|A\|^6
\le Ch^{-13}\tau^6
\end{split}
\end{equation}
 based on $\left|e^x-(1-\frac{x}2)^{-1}(1+\frac{x}2)\right|=O(x^3)$ as $x\to0$ and Proposition \ref{fullmoment}. In the last step of \eqref{term2}, we also used the fact that $\|A\|$ is uniformly bounded for any dimension $J$, whose proof is not difficult and is given in the Appendix for readers' convenience.
For term $\uppercase\expandafter{\romannumeral6}$,
Taylor expansion yields that
\begin{equation}
\begin{split}
\E\|\uppercase\expandafter{\romannumeral6}\|^2
&\le2\E\left\|\int_{t_n}^{t_{n+1}}\left(S(t_{n+1}-s)-S(\tau)\right)\epsilon\sigma\Lambda d\beta(s)\right\|^2\\
&+2\E\left\|\left(S(\tau)-\left(I-\frac12B\tau\right)^{-1}\right)\epsilon\sigma\Lambda\Delta_{n+1}\beta\right\|^2\\[3mm]
&\le C\tau^2\E\|B\epsilon\sigma\Lambda\Delta_{n+1}\beta\|^2
\le Ch^{-1}\tau^3.
\end{split}
\end{equation}
It then remains to estimate terms $\uppercase\expandafter{\romannumeral3}$, $\uppercase\expandafter{\romannumeral4}$ and $\uppercase\expandafter{\romannumeral5}$.
We can obtain the following equation in the same way as that of \eqref{moment}
\begin{align*}
\|\Psi_R^{n+1}\|^2-e^{-\alpha\tau}\|\Psi_R^n\|^2\le C\tau\|h^{-2}A\epsilon\sigma\Lambda\Delta_{n+1}\beta\|^2+C\|\epsilon\sigma\Lambda\Delta_{n+1}\beta\|^2.
\end{align*}
Multiplying above equation by $\|\Psi_R^{n+1}\|^2$, we get
\begin{equation*}
\begin{split}
&\|\Psi_R^{n+1}\|^4+\left(\|\Psi_R^{n+1}\|^2-e^{-\alpha\tau}\|\Psi_R^n\|^2\right)^2-e^{-2\alpha\tau}\|\Psi_R^n\|^4\\
\le& C\tau\left(\|\Psi_R^{n+1}\|^2-e^{-\alpha\tau}\|\Psi_R^n\|^2\right)\|h^{-2}A\epsilon\sigma\Lambda\Delta_{n+1}\beta\|^2+C\tau e^{-\alpha\tau}\|\Psi_R^n\|^2\|h^{-2}A\epsilon\sigma\Lambda\Delta_{n+1}\beta\|^2\\
&+C\left(\|\Psi_R^{n+1}\|^2-e^{-\alpha\tau}\|\Psi_R^n\|^2\right)\|\epsilon\sigma\Lambda\Delta_{n+1}\beta\|^2+C e^{-\alpha\tau}\|\Psi_R^n\|^2\|\epsilon\sigma\Lambda\Delta_{n+1}\beta\|^2\\
\le& \left(\|\Psi_R^{n+1}\|^2-e^{-\alpha\tau}\|\Psi_R^n\|^2\right)^2
+\tau e^{-2\alpha\tau}\|\Psi_R^n\|^4+C\tau\|h^{-2}A\epsilon\sigma\Lambda\Delta_{n+1}\beta\|^4+\frac{C}{\tau}\|\epsilon\sigma\Lambda\Delta_{n+1}\beta\|^4.
\end{split}
\end{equation*}
Based on \eqref{sto1} and \eqref{sto2}, we take expectation of above equation and derive
\begin{align*}
\E\|\Psi_R^{n+1}\|^4
&\le(1+\tau)e^{-2\alpha\tau}\E\|\Psi_R^n\|^4+Ch^{-2}\tau\\[2mm]
&\le(1+\tau)^{n+1}e^{-2\alpha\tau (n+1)}\E\|\Psi_R^0\|^4+Ch^{-2}
\le Ch^{-2}
\end{align*}
for $\alpha\ge\frac12$, where in the last step we have used the fact that $\E\|\Psi_R^0\|^4\le(\E\|\Psi_R^0\|^2)^2\le Ch^{-2}$ and $(1+\tau)e^{-2\alpha\tau}<1$ for $\alpha\ge\frac12$.
Similarly, we derive
$\E\|\Psi_R^n\|^8\le Ch^{-4},$
which implies that
\begin{align*}
\E\|F_R(\Psi_R^n)\|^4=\E\left(\sum_{j=1}^J\left|\psi^n_{R,j}\right|^4\right)^2\le\E\|\Psi_R^n\|^8\le Ch^{-4},\quad\forall~n\in\N.
\end{align*}
Thus, by Taylor expansion, we have
\begin{equation}\label{345}
\begin{split}
&\uppercase\expandafter{\romannumeral3}+\uppercase\expandafter{\romannumeral4}+\uppercase\expandafter{\romannumeral5}=\left(I-\frac12B\tau\right)^{-1}\left(I+\frac12B\tau\right)\Big{(}-f_R(\Psi_R^n)+O(f(\Psi_R^n)^2)\Big{)}\Psi_R^n\\
&+\int_{t_n}^{t_{n+1}}S(t_{n+1}-s)\bi\lambda F_R(\Psi_R(s))\Psi_R(s)ds
-\int_{t_n}^{t_{n+1}}S(t_{n+1}-s)\frac12\alpha\Psi_R(s)ds\\
=&\bi\lambda \int_{t_n}^{t_{n+1}}\left[S(t_{n+1}-s)F_R(\Psi_R(s))\Psi_R(s)-\left(I-\frac12B\tau\right)^{-1}\left(I+\frac12B\tau\right)F_R(\Psi_R^n)\Psi_R^n\right]ds\\
&-\frac12\alpha\int_{t_n}^{t_{n+1}}\left[S(t_{n+1}-s)\Psi_R(s)-\left(I-\frac12B\tau\right)^{-1}\left(I+\frac12B\tau\right)\Psi_R^n\right]ds\\
&+\left(I-\frac12B\tau\right)^{-1}\left(I+\frac12B\tau\right)O(f_R(\Psi_R^n)^2)\Psi_R^n\\
:=&\tilde{\uppercase\expandafter{\romannumeral3}}+\tilde{\uppercase\expandafter{\romannumeral4}}+\tilde{\uppercase\expandafter{\romannumeral5}}.
\end{split}
\end{equation}
Now we estimate above terms respectively. For $\tilde{\uppercase\expandafter{\romannumeral3}}$, we have
\begin{align*}
\tilde{\uppercase\expandafter{\romannumeral3}}
=& \bi\lambda\int_{t_n}^{t_{n+1}}\left[S(t_{n+1}-s)-\left(I-\frac12B\tau\right)^{-1}\left(I+\frac12B\tau\right)\right]F_R(\Psi_R^n)\Psi_R^nds\nonumber\\
&+ \bi\lambda\int_{t_n}^{t_{n+1}}S(t_{n+1}-s)\Big{[}F_R(\Psi_R(s))F_R(\Psi_R(s))-F_R(\Psi_R^n)\Psi_R^n\Big{]}ds\nonumber\\
=:&\tilde{\uppercase\expandafter{\romannumeral3}}_1+\tilde{\uppercase\expandafter{\romannumeral3}}_2,
\end{align*}
which satisfies
\begin{align*}
\E\|\tilde{\uppercase\expandafter{\romannumeral3}}_1\|^2
\le& \tau\int_{t_n}^{t_{n+1}}\E\left\|\left[S(t_{n+1}-s)-\left(I-\frac12B\tau\right)^{-1}\left(I+\frac12B\tau\right)\right]F_R(\Psi_R^n)\Psi_R^n\right\|^2ds\\
\le& Ch^{-12}\tau^8\E\|F_R(\Psi_R^n)\Psi_R^n\|^2
\le Ch^{-12}\tau^8\E\left[\sum_{j=1}^J\left|\psi_{R,j}^n\right|^6\right]
\le Ch^{-15}\tau^8
\end{align*}
and
\begin{align*}
\E\|\tilde{\uppercase\expandafter{\romannumeral3}}_2\|^2
\le& \tau\int_{t_n}^{t_{n+1}}\E\Big{\|}S(t_{n+1}-s)\Big{[}F_R(\Psi_R(s))F_R(\Psi_R(s))-F_R(\Psi_R^n)\Psi_R^n\Big{]}\Big{\|}^2ds\\
\le&C\tau\int_{t_n}^{t_{n+1}}\E\|\Psi_R(s)-\Psi_R(t_n)\|^2ds+C\tau^2e^{-\alpha\tau}\E\|\Psi_R(t_n)-\Psi_R^n\|^2.
\end{align*}
Noticing that
\begin{align*}
\E\|\Psi_R(s)-\Psi_R(t_n)\|^2
=&\E\bigg{\|}\int_{t_n}^sS(s-r)\bi\lambda F_R(\Psi_R(r))\Psi_R(r)dr-\int_{t_n}^sS(s-r)\frac{\alpha}2\Psi_R(r)dr\\
&+\int_{t_n}^sS(s-r)\epsilon\sigma\Lambda d\beta(r)\bigg{\|}^2
\le h^{-3}\tau^2,
\end{align*}
thus
\begin{align}
\E\|\tilde{\uppercase\expandafter{\romannumeral3}}\|^2
\le Ch^{-15}\tau^8+Ch^{-3}\tau^4+C\tau^2e^{-\alpha\tau}\E\|\Psi_R(t_n)-\Psi_R^n\|^2.
\end{align}
Term $\tilde{\uppercase\expandafter{\romannumeral4}}$ can be estimated in the same way as the estimation of $\tilde{\uppercase\expandafter{\romannumeral3}}$. Term $\tilde{\uppercase\expandafter{\romannumeral5}}$ turns to be
\begin{equation}\label{term5}
\begin{split}
\E\|\tilde{\uppercase\expandafter{\romannumeral5}}\|^2
\le& C\E\left\|f_R(\Psi_R^n)^2\Psi_R^n\right\|^2
\le C\tau^4\E\left[\sup_{1\le j\le J}\left|-\frac12\alpha+\bi\lambda|\psi_{R,j}^n|^2\right|^4\|\Psi_R^n\|^2\right]\\
\le& C\tau^4\left(\E\left(\sum_{j=1}^J\left|\psi^n_{R,j}\right|^2\right)^8\right)^{\frac12}\left(\E\|\Psi_R^n\|^4\right)^{\frac12}
\le Ch^{-5}\tau^4.
\end{split}
\end{equation}
From \eqref{term1}--\eqref{term5}, we conclude
\begin{align*}
&h\E\|\Psi_R(t_{n+1})-\Psi_R^{n+1}\|^2\\[2mm]
\le& h(1+C\tau^2)e^{-\alpha\tau}\E\|\Psi_R(t_{n})-\Psi_R^{n}\|^2+C\tau^3+Ch^{-4}\tau^4+Ch^{-12}\tau^6+Ch^{-14}\tau^8\\[2mm]
\le& C\tau^{2}+Ch^{-4}\tau^{3}+Ch^{-12}\tau^5+Ch^{-14}\tau^7
\le C\tau^2,
\end{align*}
where in the last two steps we have used the fact that $(1+C\tau^2)e^{-\alpha\tau}<1$ for $\tau$ is sufficiently small and $\tau=O(h^4)$.
\end{proof}
Based on the estimates on truncated equation and its numerical scheme, we are now in the position to give the approximate error between $\Psi(t)$ and $\Psi^n$. The proof of following theorem is motivated by \cite{BD06,L13} and holds for any fixed $T>0$ without other restrictions.
\begin{tm}\label{probability}
Consider Eq. \eqref{space} and scheme \eqref{explicitscheme}. Assume that  $\E\|\psi_0\|_{L^2}^2<\infty$, $Q\in\mathcal{HS}(L^2,\dot{H}^2)$, $\alpha\ge\frac12$ and $\tau=O(h^4)$. For any $T>0$, we derive convergence order one in probability, i.e.,
\begin{align}
\lim_{K\to\infty}\P\left(\sup_{1\le n\le[T/\tau]}\sqrt{h}\|\Psi(t_{n})-\Psi^{n}\|\ge K\tau\right)=0.
\end{align}
\end{tm}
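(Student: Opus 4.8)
The plan is to convert the mean-square order-one estimate of Theorem~\ref{trunerror} for the truncated problem into an order-one-in-probability statement for \eqref{space}--\eqref{explicitscheme} by localization. The decisive observation is that, reading the cut-off $\theta$ of \eqref{truncate} in the rescaled norm $\sqrt h\|\cdot\|$, one has $F_R(v)=F(v)$ as soon as $\sqrt h\|v\|\le R$, so that on the event
\[
\Omega_R:=\Big\{\sup_{0\le t\le T}\sqrt h\|\Psi(t)\|\le R\Big\}\cap\Big\{\max_{0\le n\le N}\sqrt h\|\Psi^n\|\le R\Big\}
\]
pathwise uniqueness for \eqref{space} and \eqref{truncate}, together with a step-by-step induction on the explicit recursions \eqref{explicitscheme} and \eqref{truncatescheme}, forces $\Psi(s)=\Psi_R(s)$ for all $s\le T$ and $\Psi^n=\Psi_R^n$ for all $n\le N$. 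Consequently $\Psi(t_n)-\Psi^n=\Psi_R(t_n)-\Psi_R^n$ throughout $\Omega_R$, and I would begin by writing
\[
\P\Big(\sup_{1\le n\le N}\sqrt h\|\Psi(t_n)-\Psi^n\|\ge K\tau\Big)\le\P\Big(\sup_{1\le n\le N}\sqrt h\|\Psi_R(t_n)-\Psi_R^n\|\ge K\tau\Big)+\P(\Omega_R^c),
\]
reducing the task to making each summand small, first by taking $R$ large and then $K$ large.

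For the tail $\P(\Omega_R^c)$ I would promote the pointwise moment bounds of Propositions~\ref{semimoment} and \ref{fullmoment} to bounds on the running maxima. For the scheme, \eqref{moment} together with \eqref{sto1}--\eqref{sto2} gives $\E[h\|\Psi^{n+1}\|^2\mid\mathcal F_{t_n}]\le e^{-\alpha\tau}h\|\Psi^n\|^2+C\tau$, whence $M_n:=h\|\Psi^n\|^2+C(T-t_n)$ is a nonnegative supermartingale and the supermartingale maximal inequality yields $\P(\max_{n\le N}\sqrt h\|\Psi^n\|\ge R)\le C/R^2$; the continuous-time supremum of $\sqrt h\|\Psi(t)\|$ is handled analogously from the It\^o expansion in the proof of Proposition~\ref{semimoment} via Doob's inequality. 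Because the localization is measured in the weighted norm $\sqrt h\|\cdot\|$, the resulting bound $\P(\Omega_R^c)\le C/R^2$ has a constant depending only on $T,\psi_0,\alpha,\epsilon,Q$ and is \emph{uniform in} $\tau$, even though $\tau=O(h^4)\to0$.

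For the truncated term the key step, and the point at which I would build the announced submartingale, is to upgrade Theorem~\ref{trunerror} from a final-time bound to a control of the running maximum. Writing $e_R^n:=\Psi_R(t_n)-\Psi_R^n$ and using the one-step decomposition $\mathrm{I}$--$\mathrm{VI}$ from that proof, the only non-predictable contribution is the stochastic integral $\mathrm{VI}$ (and the noise inside $\mathrm{III},\mathrm{IV}$), which enters $h\|e_R^{n+1}\|^2$ only through the cross term with $S(\tau)e_R^n$. Hence the martingale part of $h\|e_R^n\|^2$ has conditional quadratic variation bounded by $C\tau^3\,h\|e_R^n\|^2$, which, fed by the per-step mean-square estimate $\E[h\|e_R^n\|^2]\le C_R\tau^2$ (valid uniformly in $n$ by Theorem~\ref{trunerror}), accumulates only to $O(\tau^4)$; Doob's $L^2$ maximal inequality applied to this martingale, together with the bound $C_R\tau^2$ on the predictable compensator, then gives $\E\big[\sup_{0\le n\le N}h\|e_R^n\|^2\big]\le C_R\tau^2$. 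Chebyshev's inequality finally produces
\[
\P\Big(\sup_{1\le n\le N}\sqrt h\|\Psi_R(t_n)-\Psi_R^n\|\ge K\tau\Big)\le\frac{1}{K^2\tau^2}\,\E\Big[\sup_{0\le n\le N}h\|e_R^n\|^2\Big]\le\frac{C_R}{K^2}.
\]

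Combining the two estimates, for any $\varepsilon>0$ I would first fix $R$ so large that $\P(\Omega_R^c)<\varepsilon/2$ (possible by the uniform tail bound), and then, with $R$ frozen, send $K\to\infty$ so that $C_R/K^2<\varepsilon/2$; since $\varepsilon$ is arbitrary the limit vanishes, which is the assertion. The main obstacle is precisely the submartingale step: one must check that the cross-term structure makes the quadratic variation of the martingale part of $h\|e_R^n\|^2$ an order smaller than a naive estimate would suggest, so that $\E\sup_n h\|e_R^n\|^2$ remains $O(\tau^2)$ rather than $O(\tau)$; here the negative powers of $h$ in \eqref{term2}, \eqref{term5} and in the analysis of $\widetilde{\mathrm{III}}$ must be absorbed by $\tau=O(h^4)$, and the hypothesis $\alpha\ge\frac12$ is what guarantees the contraction factor $(1+C\tau^2)e^{-\alpha\tau}<1$ that prevents error accumulation over the unboundedly many steps $N=[T/\tau]$.
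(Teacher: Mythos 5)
Your overall strategy---localize via the truncated system, then Chebyshev plus moment bounds---is the same family of argument as the paper's, but at the decisive step you diverge, and that is where the gap lies. By working on a single global event $\Omega_R$ you must control $\P\big(\sup_{1\le n\le N}\sqrt h\|\Psi_R(t_n)-\Psi_R^n\|\ge K\tau\big)$, i.e.\ you need the \emph{maximal} estimate $\E\big[\sup_{0\le n\le N}h\|\Psi_R(t_n)-\Psi_R^n\|^2\big]\le C_R\tau^2$. Theorem \ref{trunerror} gives only a fixed-terminal-time bound, and upgrading it to a running-maximum bound is genuinely new work: your Doob-decomposition sketch must (i) identify the compensator of $h\|e_R^n\|^2$, which is itself random (it contains cross terms with $e_R^n$ and the conditional means of the one-step terms $\mathrm{II}$--$\mathrm{VI}$), and bound its running maximum pathwise using the contraction $(1+C\tau^2)e^{-\alpha\tau}<1$, and (ii) verify the claimed per-step conditional quadratic variation bound $C\tau^3\,h\|e_R^n\|^2$ while keeping track of the correlations between the noise entering $\mathrm{III}$, $\mathrm{IV}$ and $\mathrm{VI}$. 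None of this is carried out; it is asserted, and it is precisely the hardest part of your route. The paper's proof is structured to avoid this lemma altogether: it introduces the first-exceedance index $n_\gamma=\inf\{1\le n\le[T/\tau]:\|\Psi(t_n)-\Psi^n\|\ge\gamma\}$ with $\gamma=\sqrt{h^{-1}}K\tau\in(0,1)$ and shows by induction (triangle inequality plus $\gamma<1$) that on $\{\sup_{0\le n\le n_\gamma}\|\Psi(t_n)\|<R-1\}$ the truncated and untruncated quantities coincide up to $n_\gamma$, so that only the error at the \emph{single} index $n_\gamma$ needs estimating, and Theorem \ref{trunerror} (whose constant is independent of $T$ and $N$) applies directly; the submartingale $e^{2\alpha t}\|\Psi(t)\|^2$ is used only to bound the localization probability $\P\big(\sup_{0\le n\le n_\gamma}\sqrt h\|\Psi(t_n)\|\ge K_1\big)$, not the error process.

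A second, separate error: the cut-off in \eqref{truncate} is $\theta(\|v\|/R)$ in the \emph{unweighted} norm, so $F_R(v)=F(v)$ requires $\|v\|\le R$, not $\sqrt h\|v\|\le R$. With the actual truncation, Propositions \ref{semimoment} and \ref{fullmoment} give only $\E\|\Psi(t)\|^2\le Ch^{-1}$, hence $\P(\Omega_R^c)\le C/(hR^2)$; to make this small you must take $R\sim h^{-1/2}K_1$ (exactly the paper's choice $R=\sqrt{h^{-1}}K_1$), after which $C_R$ in Theorem \ref{trunerror} becomes $h$-dependent. This is harmless for the theorem as stated, since the limit is taken in $K$ with $h,\tau$ fixed, so one may still send $R\to\infty$ first and $K\to\infty$ second; but it invalidates your claims that $\P(\Omega_R^c)\le C/R^2$ with a constant \emph{uniform in} $\tau$ and that the final bound is $\tau$-uniform. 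If you instead insist on a weighted cut-off $\theta(\sqrt h\|v\|/R)$, you can no longer cite Theorem \ref{trunerror} at all: on the support of that cut-off one only has $\|F_R(v)\|\le 4R^2h^{-1}$, the Lipschitz constant of $v\mapsto F_R(v)v$ degrades by a factor $h^{-1}$, and redoing the proof of Theorem \ref{trunerror} with these constants does not obviously return $C_R\tau^2$ under $\tau=O(h^4)$. Either reading leaves a hole that must be repaired before your final two-step limiting argument (which is otherwise fine for fixed $h,\tau$) can be run.
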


\begin{proof}
For any $\gamma\in(0,1)$,
we define
$n_{\gamma}:=\inf\{1\le n\le[T/\tau]:\|\Psi(t_n)-\Psi^n\|\ge\gamma\}$ and then deduce that
\begin{align*}
&\left\{\sup_{1\le n\le[T/\tau]}\|\Psi(t_{n})-\Psi^{n}\|\ge\gamma\right\}\\
\subset&\Bigg{[}\left(\left\{\sup_{0\le n\le n_\gamma}\|\Psi(t_n)\|\ge R-1\right\}\cap\left\{\sup_{1\le n\le[T/\tau]}\|\Psi(t_{n})-\Psi^{n}\|\ge\gamma\right\}\right)\\
&\cup\left(\left\{\sup_{0\le n\le n_\gamma}\|\Psi(t_n)\|<R-1\right\}\cap\left\{\sup_{1\le n\le[T/\tau]}\|\Psi(t_{n})-\Psi^{n}\|\ge\gamma\right\}\right)\Bigg{]}\\
\subset&\Bigg{[}\left\{\sup_{0\le n\le n_\gamma}\|\Psi(t_n)\|\ge R-1\right\}\\
&\cup
\left(\left\{\sup_{0\le n\le n_\gamma}\|\Psi(t_n)\|<R-1\right\}\cap\left\{\sup_{1\le n\le[T/\tau]}\|\Psi(t_{n})-\Psi^{n}\|\ge\gamma\right\}\right)\Bigg{]}.
\end{align*}
If $\left\{\sup_{0\le n\le n_\gamma}\|\Psi(t_n)\|<R-1\right\}$ happens, it is easy to show that
$\|\Psi^k\|\le\|\Psi(t_k)-\Psi^k\|+\|\Psi(t_k)\|<R-1+\gamma<R$, $F_R(\Psi_R^k)=F(\Psi_R^k)$, $\Psi_R^k=\Psi^k$ for $k=0,1,\cdots,n_\gamma-1$ and $\Psi_R(t_n)=\Psi(t_n)$ for $0\le n\le n_\gamma$.
Furthermore, comparing scheme \eqref{truncatescheme} with \eqref{explicitscheme} and noticing that
\begin{equation}
\begin{split}
f_R(\Psi_R^{n_\gamma-1})&=\left(-\frac12\alpha I+\bi\lambda F_R(\Psi_R^{n_\gamma-1})\right)\tau\\[2mm]
&=\left(-\frac12\alpha I+\bi\lambda F(\Psi^{n_\gamma-1})\right)\tau=f(\Psi^{n_\gamma-1}),
\end{split}
\end{equation}
we have $\Psi_R^{n_\gamma}=\Psi^{n_\gamma}$,
which implies
$$\|\Psi_R(t_{n_\gamma})-\Psi_R^{n_\gamma}\|=\|\Psi(t_{n_\gamma})-\Psi^{n_\gamma}\|\ge\gamma.$$
We conclude that for any $\gamma\in(0,1)$, there exists $n_\gamma\in\N$ such that
\begin{align*}
&\left\{\sup_{0\le n\le n_\gamma}\|\Psi(t_n)\|<R-1\right\}\cap\left\{\sup_{1\le n\le[T/\tau]}\|\Psi(t_{n})-\Psi^{n}\|\ge\gamma\right\}\\
\subset&\{\|\Psi_R(t_{n_\gamma})-\Psi_R^{n_\gamma}\|\ge\gamma\}.
\end{align*}
Thus, for some constants $K,K_1>0$, choosing $\gamma=\sqrt{h^{-1}}K\tau$ and $R=\sqrt{h^{-1}}K_1$,  we deduce
\begin{equation}\label{error}
\begin{split}
&\P\left(\sup_{1\le n\le[T/\tau]}\sqrt{h}\|\Psi(t_{n})-\Psi^{n}\|\ge K\tau\right)\\
\le&\P\left(\sup_{0\le n\le n_\gamma}\sqrt{h}\|\Psi(t_n)\|\ge K_1\right)+\P\left(\sqrt{h}\|\Psi_R(t_{n_\gamma})-\Psi_R^{n_\gamma}\|\ge K\tau\right)\\
\le&\frac{h\E\Big[\sup\limits_{0\le n\le n_{\gamma}}\|\Psi(t_n)\|^2\Big]}{K_1^2}+\frac{h\E\|\Psi_R(t_{n_\gamma})-\Psi_R^{n_\gamma}\|^2}{K^2\tau^2}.
\end{split}
\end{equation}
We claim that $e^{2\alpha t}\|\Psi(t)\|^2$ is a submartingale, which ensures that
\begin{equation*}
\begin{split}
h\E\left[\sup_{0\le n\le n_{\gamma}}\|\Psi(t_n)\|^2\right]
&\le h\E\left[\sup_{0\le n\le n_{\gamma}}e^{2\alpha t_n}\|\Psi(t_n)\|^2\right]\\[2mm]
&\le e^{2\alpha T}h\E\left[\|\Psi(t_{n_\gamma})\|^2\right]
\le Ce^{2\alpha T}
\end{split}
\end{equation*}
based on a martingale inequality and Proposition \ref{semimoment}.
In fact, denoting $C_{J,P}:=\sum_{j=1}^J\sum_{k=1}^P\eta_ke_k^2(x_j)$ and
applying It\^o's formula to $e^{2\alpha t}\|\Psi(t)\|^2$ similar to \eqref{semiito},
we derive
\begin{align*}
e^{2\alpha t}\|\Psi(t)\|^2=\|\Psi(0)\|^2+2\int_0^te^{2\alpha s}\Re\left[\overline{\Psi}(s)\epsilon\sigma\Lambda d\beta(s)\right]+\frac{C_{J,P}}{\alpha}\left(e^{2\alpha t}-1\right)
\end{align*}
with $2\int_0^te^{2\alpha s}\Re\left[\overline{\Psi}(s)\epsilon\sigma\Lambda d\beta(s)\right]$ a martingale.
Apparently, we have
\begin{align*}
\E\left[e^{2\alpha t}\|\Psi(t)\|^2|\mathcal{F}_r\right]
&=\|\Psi(0)\|^2+2\int_0^re^{2\alpha s}\Re\left[\overline{\Psi}(s)\epsilon\sigma\Lambda d\beta(s)\right]+\frac{C_{J,P}}{\alpha}\left(e^{2\alpha t}-1\right)\\[2mm]
&\ge\|\Psi(0)\|^2+2\int_0^re^{2\alpha s}\Re\left[\overline{\Psi}(s)\epsilon\sigma\Lambda d\beta(s)\right]+\frac{C_{J,P}}{\alpha}\left(e^{2\alpha r}-1\right)\\[2mm]
&=e^{2\alpha r}\|\Psi(r)\|^2
\end{align*}
for $r\le t$, which completes the claim.
Hence, based on above claim and Theorem \ref{trunerror},  inequality \eqref{error} turns to be
\begin{align*}
\P\left(\sup_{1\le n\le[T/\tau]}\sqrt{h}\|\Psi(t_{n})-\Psi^{n}\|\ge K\tau\right)
\le\frac{Ce^{2\alpha T}}{K_1^2}+\frac{C_R}{K^2},
\end{align*}
which approaches to $0$ as $K_1,K\to+\infty$ properly for any $T>0$.
\end{proof}

\section{Numerical experiments}
In this section, we provide several numerical experiments to illustrate the accuracy and capability of the fully discrete scheme \eqref{full}, which can be calculated explicitly. We investigate the good performance in longtime simulation of the proposed scheme and check the temporal accuracy by
fixing the space step.
In the sequel, we take $\lambda=1,\,\alpha=0.5$, truncate the infinite series of Wiener process till $P=100$ and choose 500 realizations to approximate the expectation.

\begin{figure}[H]
\centering
\subfigure[$\epsilon=0$]{
\begin{minipage}[t]{0.48\linewidth}
  \includegraphics[height=6.0cm,width=6.0cm]{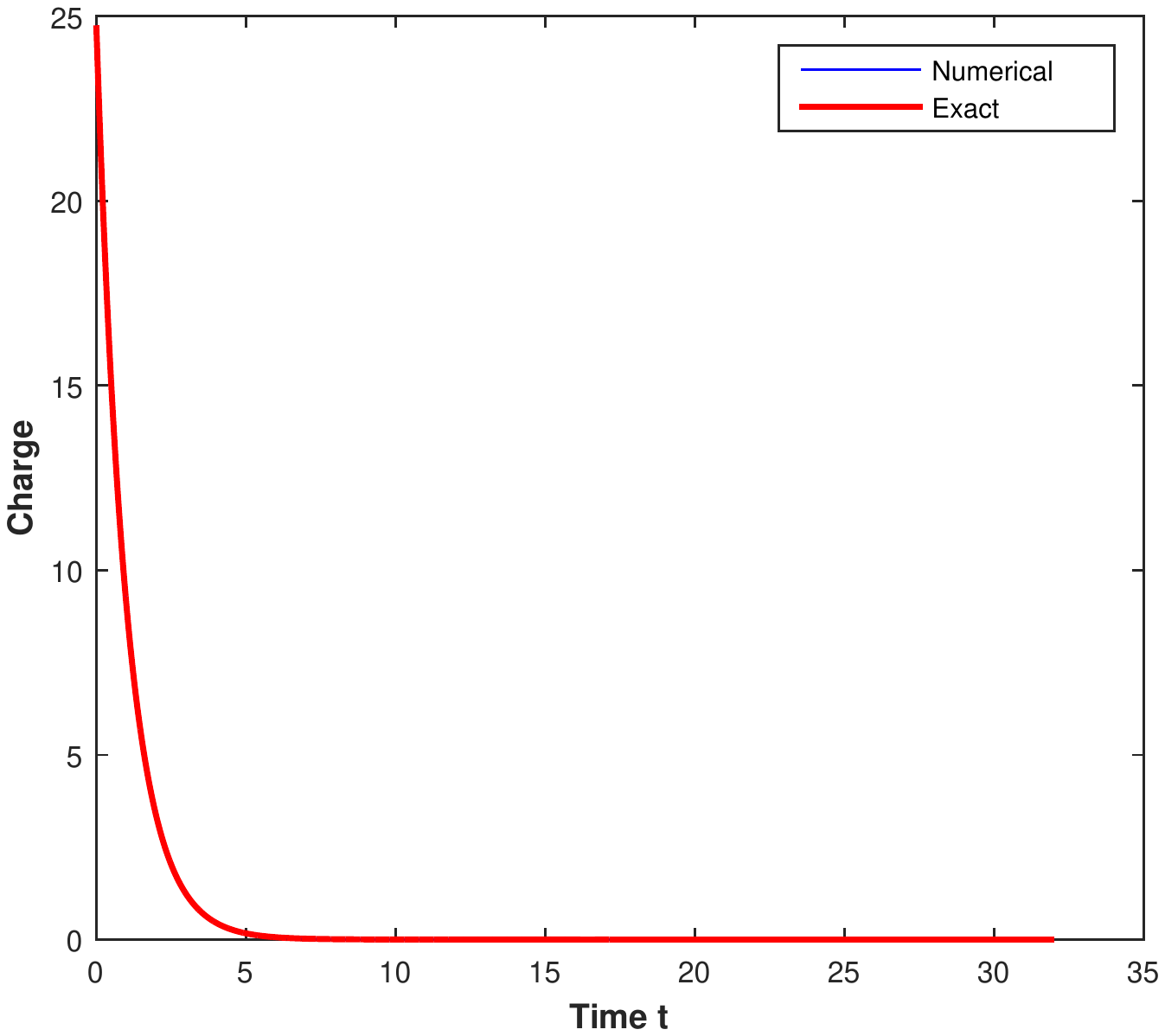}
  \end{minipage}
  }
  \subfigure[$\epsilon=1$]{
  \begin{minipage}[t]{0.48\linewidth}
  \includegraphics[height=6.0cm,width=6.0cm]{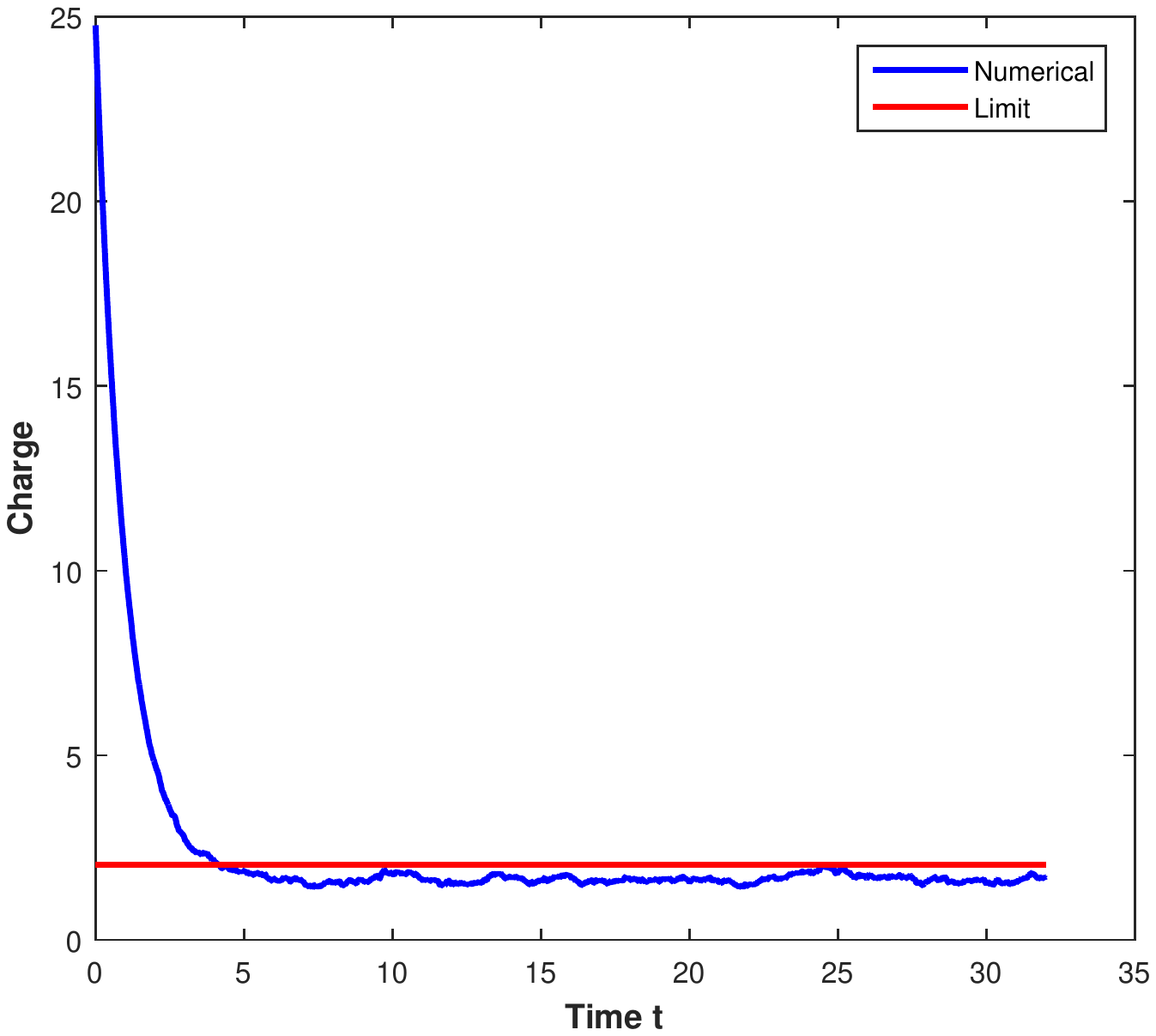}
  \end{minipage}
  }
  \caption{Evolution of the discrete charge $h\E\|\Psi^n\|^2$ with $t=n\tau$ for (a) $\epsilon=0$ and (b) $\epsilon=1$ ($h=0.1,\, \tau=2^{-6},\, T=35$).}\label{charge00}
\end{figure}

{\it Charge evolution.} For the semidiscretization, the charge of the solution satisfies the evolution formula \eqref{charge}. To investigate the recurrence relation for the discrete charge of the fully discrete scheme, Fig. \ref{charge00} plots the discrete charge for different values of $\epsilon$ with initial value $\psi_0(x)=\sin(\pi x)$, $\eta_k=k^{-6}$, $h=1/J=0.1$, $\tau=2^{-5}$ and $T=35$. We can observe that the discrete charge inherits the charge dissipation law without the noise term, i.e., $\epsilon=0$, and preserves the charge dissipation law approximately with a limit $\frac{\epsilon^2h}{\alpha}\sum_{j=1}^J\sum_{k=1}^P\eta_ke_k^2(x_j)$ calculated through \eqref{charge} for $\epsilon=1$.

\begin{figure}[H]
\centering
\subfigure[$f=\exp(-\|\Psi\|^2)$]{
\begin{minipage}[t]{0.48\linewidth}
  \includegraphics[height=6.0cm,width=6.0cm]{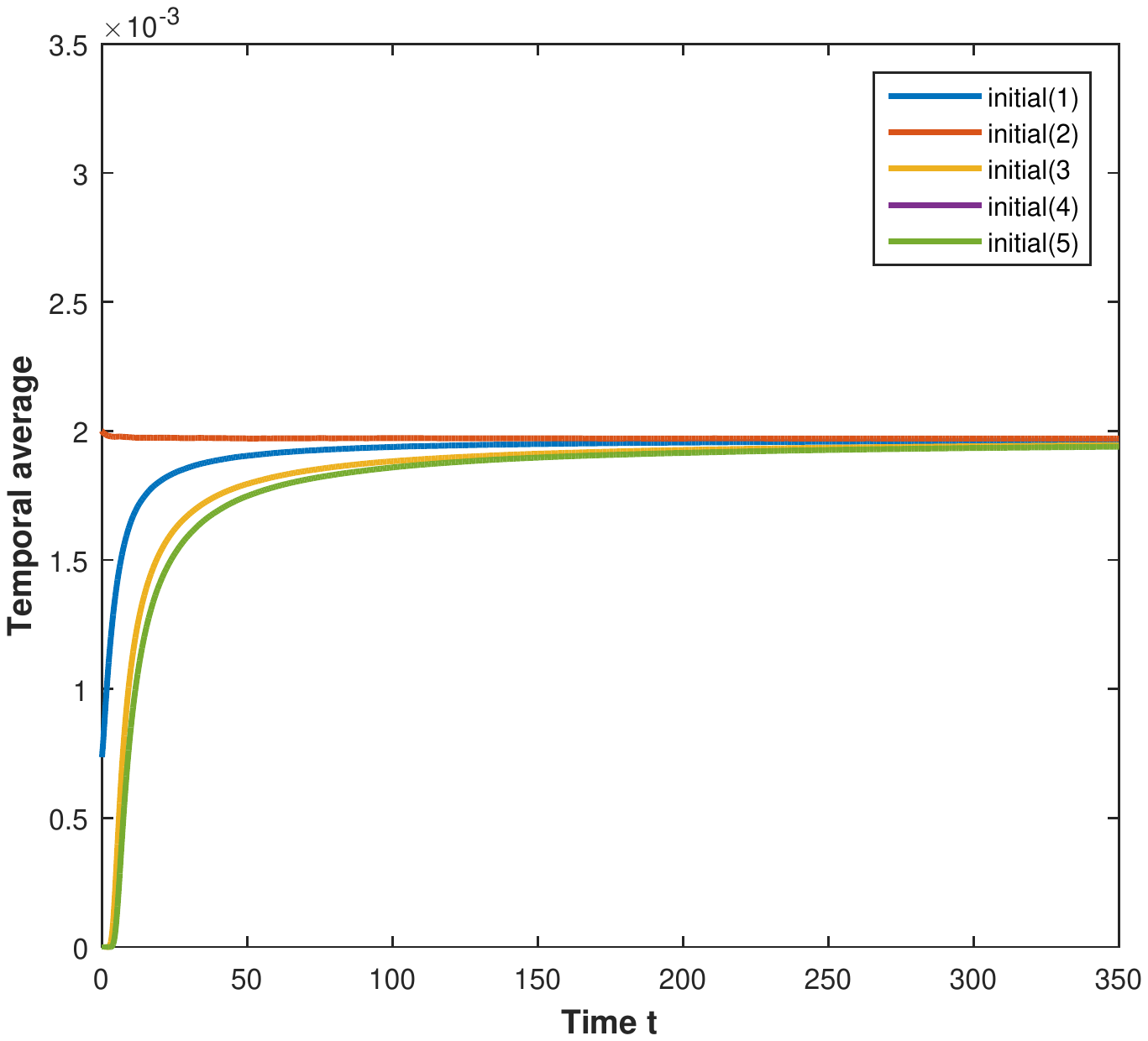}
\end{minipage}
}
\subfigure[$f=\sin(\|\Psi\|^2)$]{
\begin{minipage}[t]{0.48\linewidth}
  \includegraphics[height=6.0cm,width=6.0cm]{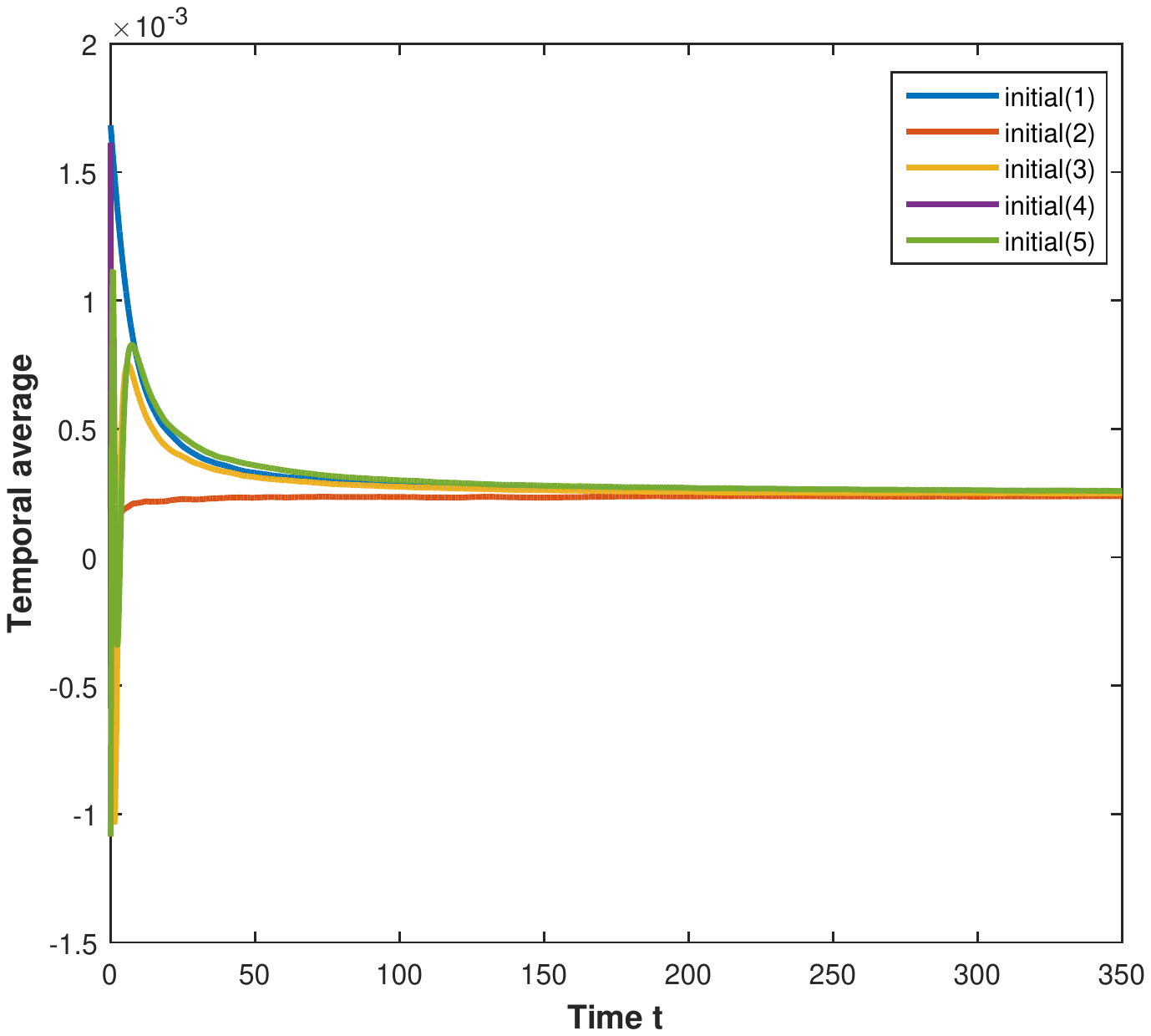}
\end{minipage}
}
  \caption{The temporal averages $\frac{1}{N}\sum_{n=1}^{N-1}\E[f(\Psi^n)]$ started from different initial values for bounded functions (a) $f=\exp(-\|\Psi\|^2)$ and (b) $f=\sin(\|\Psi\|^2)$ ($h=0.1,\,\epsilon=1,\,\tau=2^{-6},\,T=350$).}\label{temporal_average}
\end{figure}

{\it Ergodicity.}
Based on the definition of ergodicity, if numerical solution $\Psi^n$ is ergodic, its temporal averages $\frac{1}{N}\sum_{n=1}^{N-1}\E[f(\Psi^n)]$ started from different initial values will converge to the spatial average $\int_{\C^J}fd\mu_h^\tau$.
To verify this property, Fig. \ref{temporal_average} shows the
 temporal averages of the fully discrete scheme started from five different initial values $initial(1)=(1,~0,~\cdots,~0)^{T}$, $initial(2)=(0.0003 \bi,~0,~\cdots,~0)^{T}$, $initial(3)=(\sin\Big(\frac{1}{101}\pi\Big),~\sin\Big(\frac{2}{101}\pi\Big),~\cdots,~\sin\Big(\frac{100}{101}\pi\Big))^{T}$, $initial(4)=\frac{2+\bi}{20}(1,~2,~\cdots,~100)^{T}$ and $initial(5)=(\exp(-\frac{\bi}{50}),~\exp(-\frac{2\bi}{50}),~\cdots,~\exp(-\frac{100\bi}{50}))^{T}$. From Fig. \ref{temporal_average}, it can be seen that the time averages of \eqref{full} started from different initial values converge to the same value for two continuous and bounded functions $f$, when time $T=250$ is sufficiently large.

\begin{figure}[H]
\centering
\subfigure[$\tau=2^{-8}$]{
\begin{minipage}[t]{0.48\linewidth}
  \includegraphics[height=6.0cm,width=6.0cm]{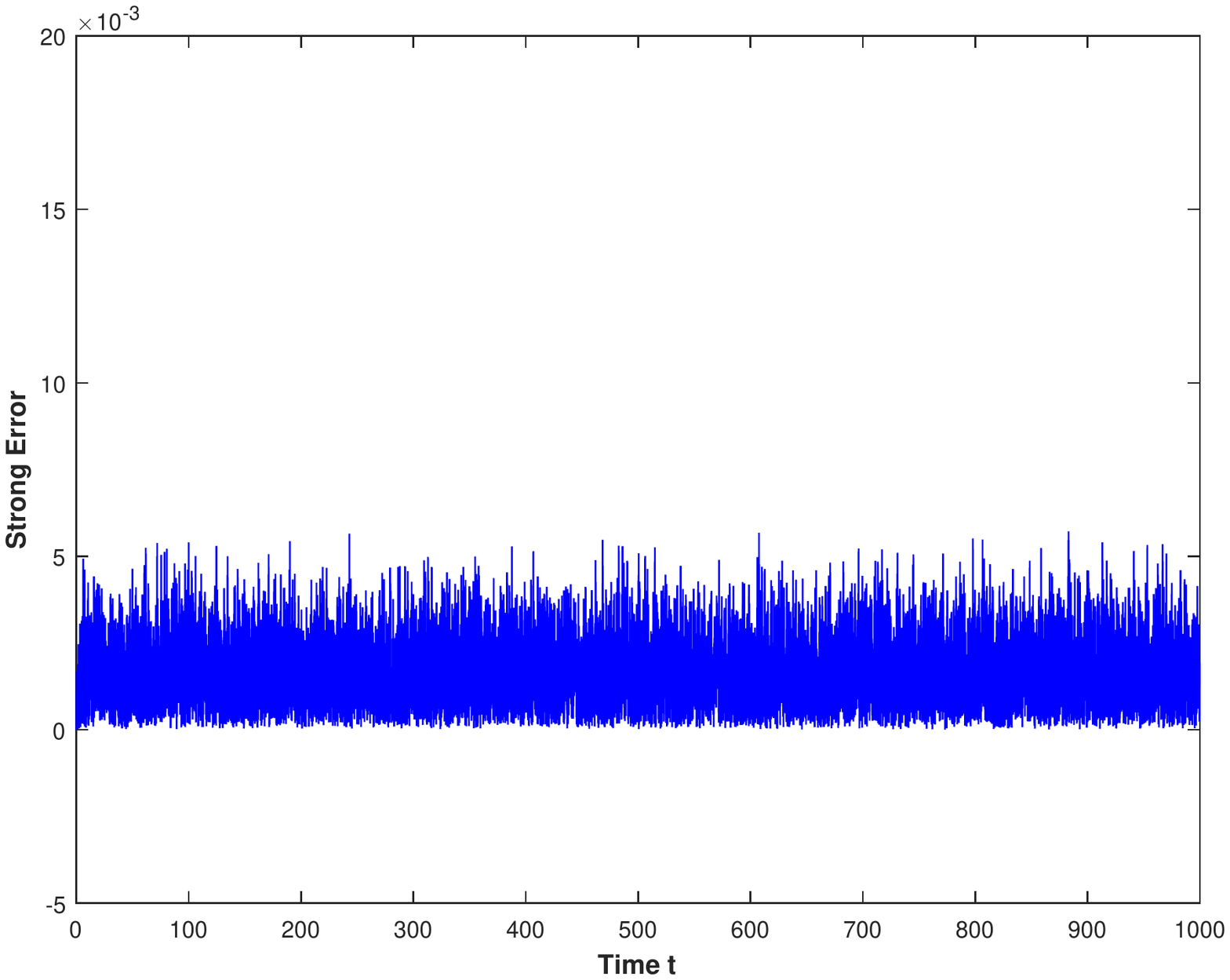}
 \end{minipage}
 }
 \subfigure[$\tau=2^{-10}$]{
\begin{minipage}[t]{0.48\linewidth}
  \includegraphics[height=6.0cm,width=6.0cm]{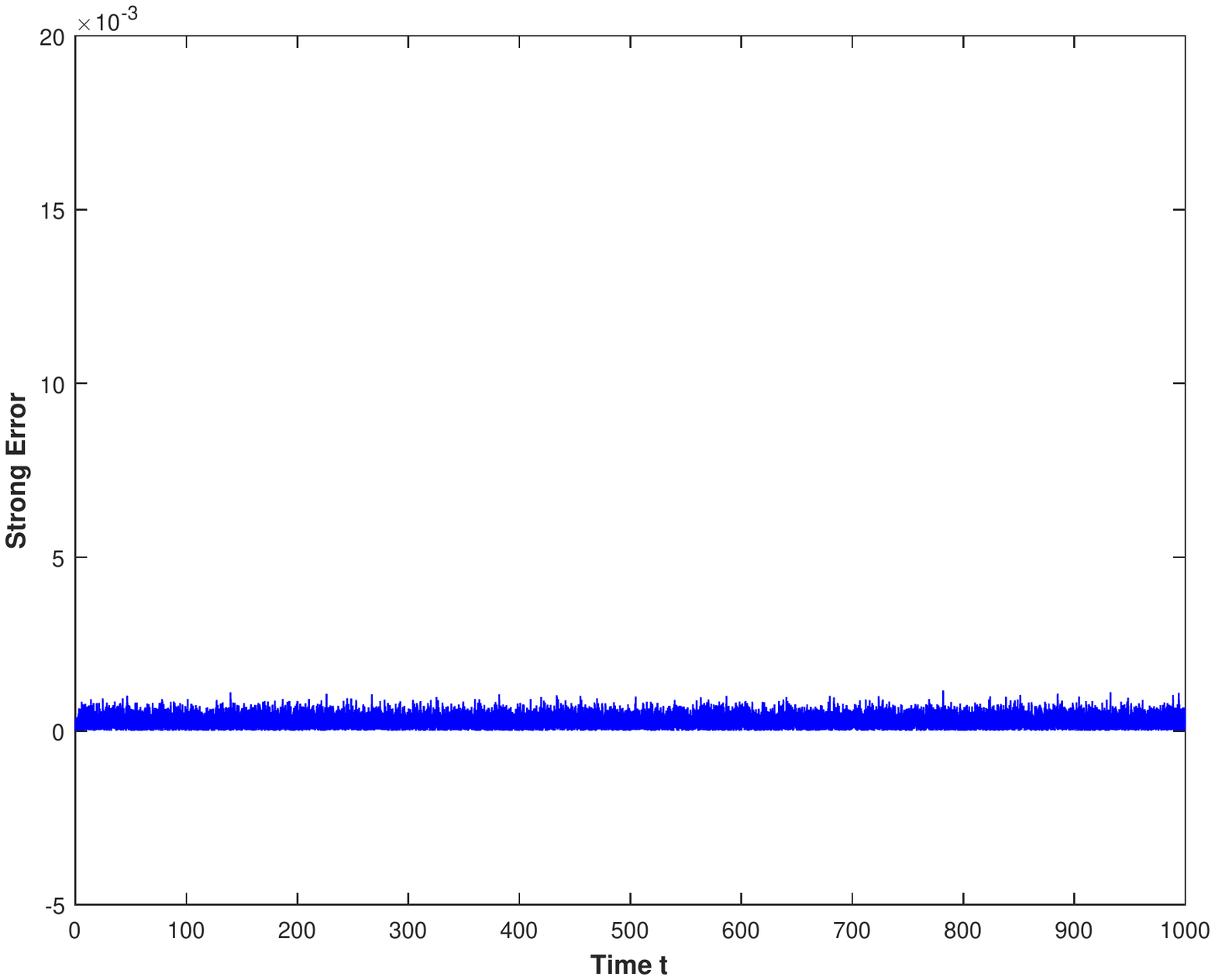}
  \end{minipage}
  }
     \caption{The mean-square convergence error $\Big(h\mathbb{E}\|\Psi(T)-\Psi^N\|^{2}\Big)^{\frac{1}{2}}$ for step sizes (a) $\tau=2^{-8}$ and (b) $\tau=2^{-10}$ ($h=0.25,\,\epsilon=1,T=10^3$). }\label{strong_error}
\end{figure}

{\it Time-independent error.}
As stated in Theorem \ref{trunerror} and \ref{probability}, the mean-square convergence error $\left(h\E\|\Psi_R(T)-\Psi_R^N\|^2\right)^{\frac12}$ with respect to the truncated equation \eqref{truncate} is independent of time $T$, and convergence in probability sense with respect to the original equation is also independent of time $T$. To clarify this property, by defining the mean-square convergence error as
\begin{equation*}
  \mathcal{E}_{h,\tau}:=\Big(h\mathbb{E}\|\Psi(T)-\Psi^N\|^{2}\Big)^{\frac{1}{2}},~T=N\tau,
\end{equation*}
Fig. \ref{strong_error} displays the error $\mathcal{E}_{h,\tau}$ over long time $T=10^3$ for different time step sizes: (a) $\tau=2^{-8}$ and (b) $\tau=2^{-10}$ with $h=0.25$, and shows that the mean-square convergence error is independent of time interval, which coincides with our theoretical results.

\begin{figure}[H]
\centering
 \subfigure[$\epsilon=0$]{
\begin{minipage}[t]{0.48\linewidth}
  \includegraphics[height=6.0cm,width=6.0cm]{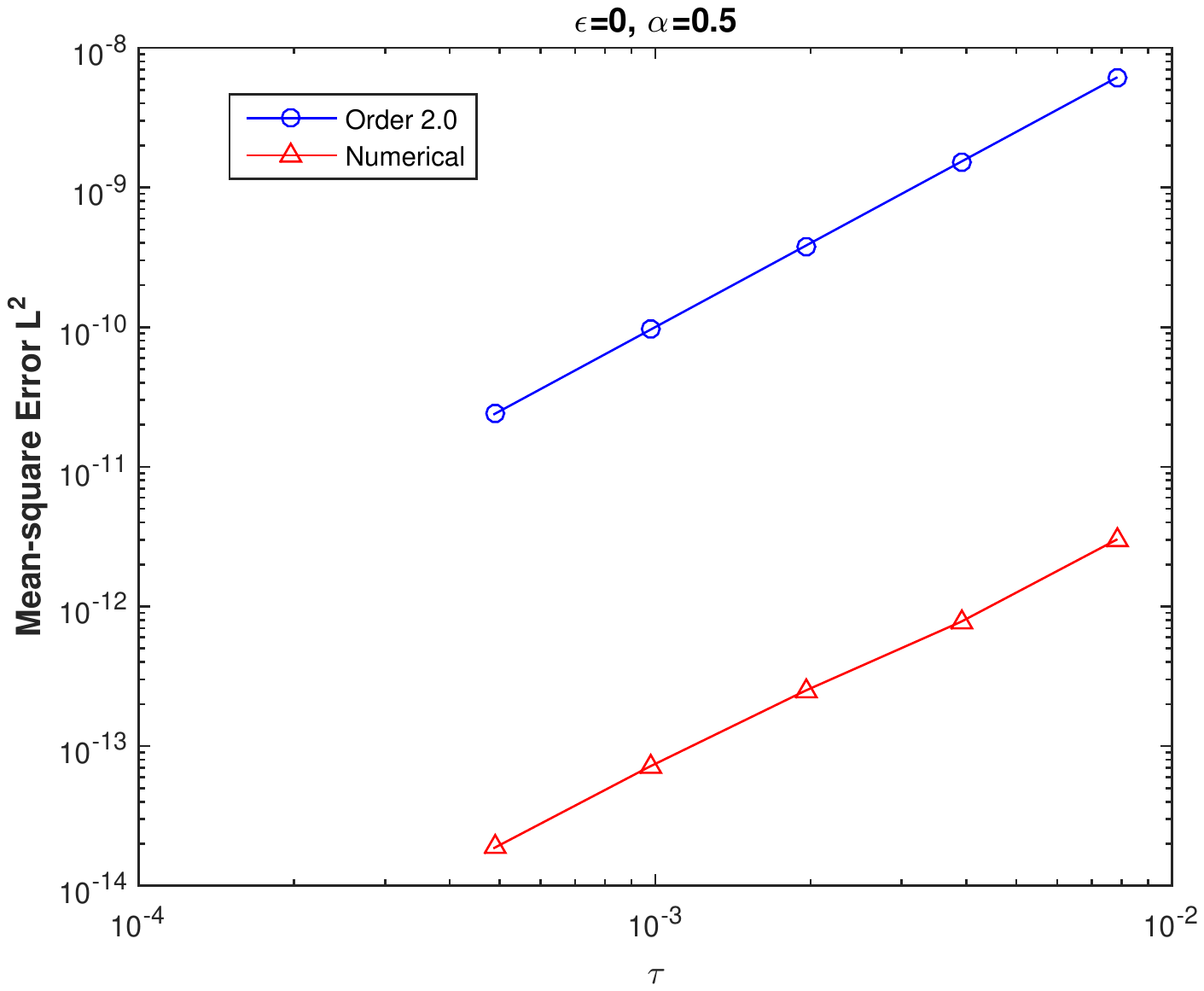}
 \end{minipage}
 }
  \subfigure[$\epsilon=1$]{
\begin{minipage}[t]{0.48\linewidth}
  \includegraphics[height=6.0cm,width=6.0cm]{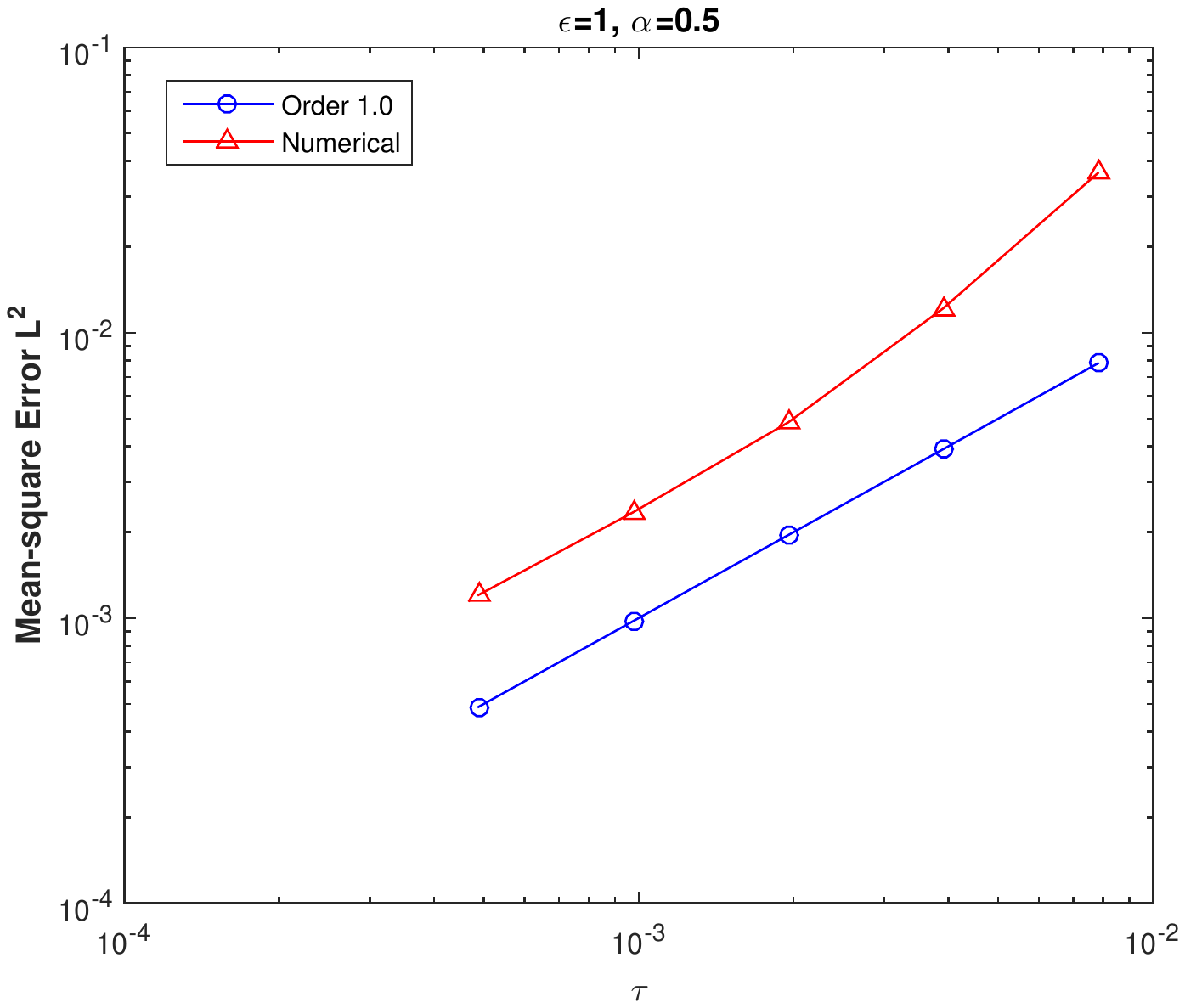}
  \end{minipage}
  }
     \caption{Rates of convergence of \eqref{full} for (a) $\epsilon=0$ and (b) $\epsilon=1$, respectivly ($h=0.1,\,T=1,\,\tau=2^{-l},\,11\le l\le14$). }\label{order}
\end{figure}

{\it Convergence order.}
We investigate the mean-square convergence order in temporal direction of the proposed method \eqref{full} in this experiment. Let $h=0.1$, $T=1$ and initial value $\psi_0(x)=\sin(\pi x)$. We plot
 $\mathcal{E}_{h,\tau}$ against $\tau$ on a log-log scale with various combinations of $(\alpha,\epsilon)$ and  take the method \eqref{full} with small time stepsize $\tau=2^{-16}$ as the reference solution. We then compare it to
the method \eqref{full} evaluated with time steps $(2^{2}\tau, 2^{3}\tau, 2^{4}\tau, 2^{5}\tau)$ in order to show the rate of convergence.
Fig. \ref{order} presents the mean-square convergence order for the error $\mathcal{E}_{h,\tau}$ with various sizes of $\epsilon$. Fig. \ref{order} shows that the proposed scheme \eqref{full} is of order 2 for the deterministic case, i.e., $\epsilon=0$, and of order 1 for the stochastic case with $\epsilon=1$, which coincides with the theoretical analysis.

\section{Appendix}
\begin{proof}[Proof of uniform boundedness of $\|A\|$]
Based on the definition of $\|A\|$, we only need to show that
the maximum eigenvalue $\lambda_*:=\max\{\lambda:\det(\lambda I-\hat{A})=0\}$ of positive definite matrix
$\hat{A}:=-A\in\R^{J\times J}$ is uniformly bounded with respect to dimension $J$. Let $x:=\lambda-2>-2$. Then
$\lambda I-\hat{A}$ turns to be
\begin{equation*}
\left(
\begin{array}{cccc}
x&1 & & \\
 1&x&1 & \\
&\ddots &\ddots &\ddots \\
 & &1 &x\\
\end{array}
\right)\\
\cong\left(
\begin{array}{cccc}
x&1 & & \\
 &x-\frac1x&1 & \\
& & x-\frac1{x-\frac1x}& \\
 & & &\ddots\\
\end{array}
\right)=:X.
\end{equation*}
We define
$a_1(x):=x$, $a_{n+1}(x):=x-\frac1{a_{n}(x)}$ and $X_n(x)=\prod_{i=1}^na_i(x)$ for $n\ge1$, and deduce that
\begin{align}\label{Xn}
X_{n+2}(x)=xX_{n+1}(x)-X_{n}(x).
\end{align}
Noticing that
$X_2(x)=x^2-1>0$ and $X_2(x)-X_1(x)=x^2-x-1>0$ for any $x\ge2$.
We assume that $X_{j+1}(x)>0$ and $X_{j+1}(x)-X_{j}(x)>0$
for any $x\ge2$ and $1\le j\le n$, which contributes to
\begin{align*}
X_{n+2}(x)-X_{n+1}(x)=(x-2)X_{n+1}(x)+(X_{n+1}(x)-X_{n}(x))>0
\end{align*}
and $X_{n+2}(x)>X_{n+1}(x)>0$
based on \eqref{Xn}. Then the induction yields that $X_n(x)>0$ for any $x\ge2$ and $n\in\N$, which implies that $\lambda_*=\max\{x:X_J(x)=0\}+2\le 4.$
\end{proof}

\nocite{*} 

\bibliography{wangxu}
\bibliographystyle{plain}

\end{document}